\newtheorem{dl}{Theorem}[section]
\newtheorem{tl}[dl]{Corollary}
\newtheorem{yl}[dl]{Lemma}
\newtheorem{dy}[dl]{Definition}
\newtheorem{lz}[dl]{Example}
\newtheorem{xinzhi}[dl]{Proposition}
\newtheorem{remark}[dl]{Remark}
\numberwithin{equation}{section}
\newproof{pot1}{Proof of Theorem \ref{ma1}}
\newproof{pot2}{Proof of Theorem \ref{ma2}}
\newtheorem{wenti}{\bf Research problem}[section]
\def\qed{\hfill \rule{4pt}{7pt}}
\def\pf{\noindent {\sl Proof.~}}
\begin{document}
\title{Some nonlinear  inverse relations of the Bell polynomials via the Lagrange inversion formula}
\author{Jin Wang\fnref{a,fn2}}
\fntext[fn2]{E-mail address: jinwang@zjnu.edu.cn}
\address[a]{College of Mathematics and Computer Science, Zhejiang Normal University, Jinhua 321004, P.~R. ~China}
\author{Xinrong Ma\fnref{b,fn3}}
\fntext[fn3]{E-mail address: xrma@suda.edu.cn}
\address[b]{Department of Mathematics, Soochow University, Suzhou 215006, P.~R. ~China}
\begin{abstract}
In this paper, by means of the classical Lagrange inversion formula, we establish a general  nonlinear inverse relation  as the solution to the  problem proposed in the  paper [J. Wang, Nonlinear inverse relations for the Bell polynomials via the Lagrange inversion formula, J. Integer Seq., Vol. 22 (2019), \href{https://cs.uwaterloo.ca/journals/JIS/VOL22/Wang/wang53.pdf}{Article  19.3.8}].  As
  applications of  this  inverse relation, we not only find a short proof  of  another nonlinear inverse relation due to Birmajer et al.,
  but also set up a few convolution  identities concerning  the Mina polynomials.
\end{abstract}
\begin{keyword} Formal power series; Bell polynomial; Mina polynomial; recurrence relation; Lagrange inversion formula;  nonlinear; inverse relation; convolution identity.\\

{\sl AMS subject classification 2000}:  05A10,05A19; 11B83
\end{keyword}
\maketitle
\thispagestyle{plain}
\section{Introduction}
Throughout this paper, we shall adopt the same notation of Henrici \cite{Henrici}.  For instance, we shall use $\mathbb{C}[[t]]$ to denote the ring of formal power series (in short, fps) over  the complex  number field $\mathbb{C}$ and for any $f(t)=\sum_{n\geq 0}a_nt^n\in \mathbb{C}[[t]]$, the coefficient functional
$$\boldsymbol\lbrack t^n\boldsymbol\rbrack f(t)=a_n, n\geq 0.$$
For  convenience,  we define
\begin{align*}
 \mathcal{L}_0=\bigg\{\sum_{n=0}^\infty a_nt^n| a_0\neq 0\bigg\},~~\mathcal{L}_1=\bigg\{\sum_{n=0}^\infty a_nt^n| a_0=0, a_1\neq 0\bigg\}.
\end{align*}
Moreover, for $f(t),g(t)\in \mathbb{C}[[t]]$, $g(t)$ is said to be the composite inverse of $f(t)$ if $f(g(t))=g(f(t))=t$. As conventions, we denote the composite inverse $g(t)$ of $f(t)$ by  $f^{\langle-1\rangle}(t)$.

\begin{yl}\label{inverxinzhi}
 Given $f(t)\in \mathbb{C}[[t]]$,
  $f(t)$ has the  composite inverse  if and only if $f(t)\in \mathcal{L}_1$.
   \end{yl}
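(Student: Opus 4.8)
The plan is to prove the two implications separately, relying throughout on the basic fact that the composition $u(v(t))$ of two formal power series is again a well-defined element of $\mathbb{C}[[t]]$ \emph{precisely} when $v(t)$ has vanishing constant term; otherwise the constant term of $u(v(t))$ would be an infinite sum of scalars. This well-definedness constraint is exactly what pins down the constant coefficient of $f$, so it is the first thing I would isolate.

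For the ``only if'' direction, suppose $f(t)=\sum_{n\ge 0}a_nt^n$ admits a composite inverse $g(t)=\sum_{n\ge 0}c_nt^n$, so that $f(g(t))=g(f(t))=t$. Since $g(f(t))$ must be a genuine power series, I would first conclude $f(0)=a_0=0$, and symmetrically $g(0)=c_0=0$; hence both $f$ and $g$ start at order one. Writing $f(t)=a_1t+a_2t^2+\cdots$ and $g(t)=c_1t+c_2t^2+\cdots$, I would then extract the coefficient of $t$ on both sides of $f(g(t))=t$, which reads $a_1c_1=1$. This forces $a_1\neq 0$, and therefore $f\in\mathcal{L}_1$.

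For the ``if'' direction, assume $f\in\mathcal{L}_1$, so $f(t)=a_1t+a_2t^2+\cdots$ with $a_1\neq0$. I would first construct a right inverse $g(t)=\sum_{n\ge1}c_nt^n$ by solving $f(g(t))=t$ coefficient by coefficient. Matching the coefficient of $t$ gives $a_1c_1=1$, solvable because $a_1\neq0$; matching the coefficient of $t^n$ for $n\ge2$ produces an equation of the shape $a_1c_n+P_n(c_1,\dots,c_{n-1})=0$, where $P_n$ is a polynomial in the already-determined lower coefficients, so $c_n$ is uniquely recovered upon dividing by $a_1$. This recursion yields a series $g\in\mathcal{L}_1$ with $f(g(t))=t$.

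The remaining and most delicate step is to upgrade this right inverse to a genuine two-sided inverse, which is where I expect the real content to lie. Since the constructed $g$ again belongs to $\mathcal{L}_1$, the same recursion furnishes a right inverse $h\in\mathcal{L}_1$ of $g$, i.e. $g(h(t))=t$. Writing composition as $\circ$ and denoting the identity series by $t$, and invoking the associativity of composition of series without constant term, I would compute $h=t\circ h=(f\circ g)\circ h=f\circ(g\circ h)=f\circ t=f$, whence $g\circ f=g\circ h=t$. Thus $g$ is simultaneously a left and right inverse, so $f$ has a composite inverse. The crux is therefore not the routine recursive solvability but this clean associativity argument that a one-sided inverse is automatically two-sided; an entirely analogous cancellation shows moreover that the composite inverse is unique.
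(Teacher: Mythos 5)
Your proposal is correct, but there is nothing in the paper to compare it against: the paper states Lemma \ref{inverxinzhi} without any proof, treating it as a classical fact about $\mathbb{C}[[t]]$ (it is standard in the formal power series literature, e.g.\ Henrici's book cited by the authors). Your argument is the standard one, and all three of its components are sound: forcing the vanishing of the constant terms from well-definedness of the two compositions, reading $a_1c_1=1$ off the coefficient of $t$ in $f(g(t))=t$ to get $a_1\neq 0$, solving the triangular system $a_1c_n+P_n(c_1,\ldots,c_{n-1})=0$ to produce a right inverse, and then upgrading it to a two-sided inverse by the monoid computation $h=(f\circ g)\circ h=f\circ(g\circ h)=f$, which correctly uses associativity of composition for series with zero constant term. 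You are also right that this last step, not the recursion, is where the content lies, and your closing remark on uniqueness follows by the same cancellation.

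One small caveat: your opening assertion that $u(v(t))$ is well defined \emph{precisely} when $v(0)=0$ is not literally true, since when $u$ is a polynomial the substitution makes sense for any $v$; for instance $f(t)=1+t$ and $g(t)=t-1$ satisfy $f(g(t))=g(f(t))=t$ with both compositions perfectly meaningful. The lemma as stated is therefore correct only under the usual convention (implicit in the paper) that composition in $\mathbb{C}[[t]]$ is defined, uniformly in the outer series, only when the inner series has zero constant term. Under that convention your ``only if'' step is airtight; without it, the lemma itself would be false rather than your proof. It would be worth one sentence in your write-up making this convention explicit.
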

   We also need  the ordinary Bell polynomials.
\begin{dy} For integers $n\geq k\geq 0$ and
variables
 $(x_n)_{n\geq  1}$, the sum
 \begin{align}
 \sum_{\sigma_k(n)}\frac{k!}{i_1!i_2!\cdots i_{n-k+1}!}x_{1}^{i_1}x_{2}^{i_2}\cdots x_{n-k+1}^{i_{n-k+1}}\label{bellexpression}
\end{align}
is called  the ordinary Bell polynomial in $x_1,x_2,\ldots,x_{n-k+1}$, where $\sigma_k(n)$ denotes the set of  partitions of $n$ with
 $k$ parts, namely, all nonnegative integers $i_1,i_2,\ldots,i_{n-k+1}$ subject to
\begin{align}
\left\{\begin{array}{l}
i_1+i_2+\cdots+i_{n-k+1}=k\\ i_1+2i_2+\cdots+(n-k+1)i_{n-k+1}=n.
\end{array}\right.
\end{align}
We write $B_{n,k}(x_1,x_2,\ldots,x_{n-k+1})$  for such the Bell polynomial as given by (\ref{bellexpression}).
\end{dy}
As of today, the Bell polynomials have played  very important roles in analysis, combinatorics, and number theory.  It should be pointed out here that
 the above Bell polynomials are in agreement with the exponential Bell polynomials \cite[Definition, p.\ 133]{comtet} with the specialization $x_n\to x_n/n!$ and multiplied by $n!/k!$.

 The ordinary generating function of   the Bell polynomials $B_{n,k}(x_1,x_2,\ldots,x_{n-k+1})$ will be often used in our discussions.
\begin{yl}[{\rm \cite[Lemma 3]{wang}}]\label{final}
For any fps
$f(t)=\sum_{n\geq 1} x_n t^n\in \mathcal{L}_1$,
 it holds
 \begin{align}f^k(t)=\sum_{n=k}^\infty B_{n,k}(x_1,x_2,\ldots,x_{n-k+1})t^n.\label{bellnewone}\end{align}
 \end{yl}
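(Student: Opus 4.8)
The plan is to prove Lemma~\ref{final} by directly expanding the $k$-th power of the series $f(t)=\sum_{n\geq 1}x_nt^n$ and identifying the coefficient of $t^n$ with the Bell polynomial in~\eqref{bellexpression}. Since $f(t)\in\mathcal{L}_1$ has no constant term, the lowest-degree contribution to $f^k(t)$ is $x_1^k t^k$, which already explains why the summation in~\eqref{bellnewone} starts at $n=k$.

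\textbf{Main steps.} First I would write the $k$-th power as a $k$-fold product and apply the multinomial theorem: since
\begin{align*}
f^k(t)=\Bigl(\sum_{m\geq 1}x_m t^m\Bigr)^k=\sum x_{1}^{i_1}x_{2}^{i_2}\cdots\, t^{\,1\cdot i_1+2\cdot i_2+\cdots}\cdot\frac{k!}{i_1!\,i_2!\cdots},
\end{align*}
where the sum ranges over all nonnegative exponent vectors $(i_1,i_2,\ldots)$ with $i_1+i_2+\cdots=k$, the multinomial coefficient $\frac{k!}{i_1!i_2!\cdots}$ counts the number of ordered ways to select the factors. Second, I would extract the coefficient of $t^n$ by imposing the degree condition $1\cdot i_1+2\cdot i_2+\cdots=n$; combined with the part-count condition $i_1+i_2+\cdots=k$, these are exactly the two constraints defining the partition set $\sigma_k(n)$. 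Third, I would observe that under these two constraints the index $m$ with $i_m>0$ can range at most up to $n-k+1$: if all but one part equalled $1$, the remaining part would have size $n-(k-1)=n-k+1$, so no variable $x_m$ with $m>n-k+1$ can appear. This justifies truncating the variable list to $x_1,\ldots,x_{n-k+1}$ and matches the index range $i_1,\ldots,i_{n-k+1}$ in Definition. Comparing the resulting coefficient with~\eqref{bellexpression} then yields $\boldsymbol[t^n\boldsymbol]f^k(t)=B_{n,k}(x_1,\ldots,x_{n-k+1})$.

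\textbf{Main obstacle.} The genuinely computational content is routine—this is essentially a bookkeeping argument—so the only subtlety worth care is the justification that the convolution defining each coefficient of $f^k(t)$ is a \emph{finite} sum, so that the formal power series operations are legitimate and no convergence issue arises. This follows because $f(t)\in\mathcal{L}_1$ forces each factor to contribute degree at least $1$, hence only finitely many exponent vectors $(i_1,i_2,\ldots)$ satisfy the degree bound $\sum m\,i_m=n$; in fact all but finitely many $i_m$ vanish. I would make this explicit at the start so that the multinomial expansion manipulates only finite sums at every coefficient level. With that point settled, matching the coefficient against the definition in~\eqref{bellexpression} completes the proof.
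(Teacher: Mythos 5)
Your proposal is correct, and no comparison with the paper's own argument is possible because the paper states Lemma~\ref{final} without proof, quoting it from \cite{wang}; your direct multinomial expansion is exactly the standard argument behind it, with the two constraints $\sum_m i_m=k$ and $\sum_m m\,i_m=n$ reproducing the definition of $\sigma_k(n)$. You also correctly handle the two points worth checking: the bound $m\leq n-k+1$ on indices with $i_m>0$ (since the other $k-1$ parts each contribute at least $1$), and the finiteness of each coefficient-level sum, which is what makes the formal manipulation legitimate for $f\in\mathcal{L}_1$.
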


 Aside from the generating function of the Bell polynomials,  it is worthwhile to study  inverse relations lurking behind it. Properly speaking, the term ``inverse"  means a pair of equivalent relations  expressing $(x_n)_{n\geq 1}$ in terms of  the Bell polynomials in variables $(y_n)_{n\geq 1}$  and vice versa. To the best of our knowledge, it is one of the most interesting problems first  posed and solved by Riordan  \cite[Chaps.\ 2 and 3]{riodan},
 and  also investigated  by  Hsu  et al.\ \cite{laoshi} and   Mihoubi  \cite{mihoubi}.
 The reader may consult  Riordan \cite[Sect.\ 5.3]{riodan} for  further details and Mihoubi \cite{mihoubi} for many of such inverse relations.
  It is especially noteworthy  that in their paper \cite{birmajer},  via the establishment of many interesting identities for the Bell polynomials,
   Birmajer  et al.  achieved   the following   somewhat unusual (essentially different from  \cite{laoshi,xrma-huang,riodan})   inverse relation.
\begin{dl}[{\cite[Theorem\ 17]{birmajer}}]\label{Gosper} Let $B_{n,k}(x_1,x_2,\ldots,x_{n-k+1})$ denote the Bell polynomials as above. Then for any integers $a,b,m$ with $m\geq 1$, $a^2+b^2\neq 0$, and
 any sequence $(x_m)_{m\geq 1}$, the system of   nonlinear relations
\begin{align}
z_m(b)=\displaystyle\sum_{k=1}^m\frac{a m+bk}{k(a m+b)}{-a m-b\choose k-1}B_{m,k}(x_1,x_2,\ldots,x_{m-k+1})\label{gosper-idi}
\end{align}
is equivalent to the system of nonlinear  relations
\begin{align}
x_m=\displaystyle\sum_{k=1}^m\frac{1}{k}{a m+bk\choose k-1}B_{m,k}(z_1(b),z_2(b),\ldots,z_{m-k+1}(b)).\label{gosper-idii}
\end{align}
 \end{dl}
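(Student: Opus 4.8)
The plan is to convert both systems into statements about formal power series and to show that each is equivalent to a single clean functional relation between $f$ and $g$. First I would set $f(t)=\sum_{n\ge1}x_nt^n$ and $g(t)=\sum_{n\ge1}z_n(b)t^n$. Since for each fixed $m$ both (\ref{gosper-idi}) and (\ref{gosper-idii}) are polynomial identities in $x_1,\dots,x_m$, it suffices to prove them on the dense set $x_1\neq0$; there $f,g\in\mathcal L_1$, so Lemma~\ref{inverxinzhi} makes the reparametrization $t\mapsto t(1+f(t))^a$ invertible, while Lemma~\ref{final} gives $B_{m,k}(x_1,\dots)=[t^m]f^k(t)$ and $B_{m,k}(z_1,\dots)=[t^m]g^k(t)$. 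The target relation I would aim for is
\[ g\bigl(t(1+f(t))^{a}\bigr)=f(t)\,(1+f(t))^{-b}. \]
My claim is that both (\ref{gosper-idi}) and (\ref{gosper-idii}) are equivalent to it, whence they are equivalent to each other.

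For (\ref{gosper-idii}) I would write $\binom{am+bk}{k-1}=[v^{k-1}](1+v)^{am+bk}$ and factor $(1+v)^{am+bk}=(1+v)^{am}\bigl((1+v)^b\bigr)^k$, so that the sum over $k$ becomes a Lagrange--B\"urmann expansion. Introducing $W=W(t)$ by $W=g(t)(1+W)^{b}$ (equivalently $g=W(1+W)^{-b}$) and taking $\Phi'(v)=(1+v)^{am}$, the Lagrange--B\"urmann formula collapses (\ref{gosper-idii}) to
\[ x_m=\frac{1}{am+1}\,[t^m](1+W(t))^{am+1}. \]
I would then strip off the degree-dependent exponent by the diagonal Lagrange identity $\sum_{m\ge0}\frac{1}{am+1}\bigl([s^m]A(s)^{am+1}\bigr)t^m=A\bigl(y(t)\bigr)$, valid for any $A$ with $A(0)=1$ and $y=t\,A(y)^a$. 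Applied to $A=1+W$ this yields $f(t)=W(y(t))$ with $y(t)=t(1+f(t))^a$; substituting into $g=W(1+W)^{-b}$ then gives the target relation.

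For (\ref{gosper-idi}) I would run the computation in reverse: Lagrange inversion of $y=t(1+f)^a$ extracts from the target relation
\[ z_m=[y^m]g(y)=\frac{1}{m}\,[t^{m-1}]\Bigl\{\tfrac{d}{dt}\bigl(f(1+f)^{-b}\bigr)\Bigr\}(1+f)^{-am}. \]
It then remains to check that the explicit sum (\ref{gosper-idi}) equals this. Here I would use the contiguous identity $(k-1)\binom{-am-b}{k-1}=-(am+b)\binom{-am-b-1}{k-2}$ to split the coefficient $\frac{am+bk}{k(am+b)}\binom{-am-b}{k-1}$ into $\frac1k\binom{-am-b}{k-1}-\frac bk\binom{-am-b-1}{k-2}$, recognize the two resulting sums against $f^k$ as the antiderivatives producing $(1+f)^{1-am-b}$ and $(1+f)^{-am-b}$, and finally convert $[t^m]$ into $\frac1m[t^{m-1}]\frac{d}{dt}$ to match the Lagrange form term by term.

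The main obstacle is the middle step of the second paragraph: the degree index $m$ sits inside the exponent $am+1$, so no substitution of the naive form can linearize it. The diagonal Lagrange identity, together with the reparametrization $y=t(1+f)^a$, is precisely what absorbs the factor $am$ and turns the self-referential coefficient extraction into the honest composition $f=W\circ y$. Once that identity is in place, the remaining work---the Lagrange--B\"urmann summation and the binomial bookkeeping of the third paragraph, including tracking the constant terms annihilated by $[t^m]$ for $m\ge1$---is routine.
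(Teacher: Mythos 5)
Your proposal is correct, but note first that the paper never actually proves Theorem \ref{Gosper}: it is quoted from \cite[Theorem 17]{birmajer} purely as motivation, so the comparison must be with the paper's general method rather than with an in-paper proof of this statement. Your target relation $g\bigl(t(1+f(t))^{a}\bigr)=f(t)(1+f(t))^{-b}$ is exactly the paper's functional equation \eqref{equationknown-gen} specialized to $m=2$, $p=-a$, $\phi=1+f$, $F=g$, $(a_1,q_1)=(1,1-b)$, $(a_2,q_2)=(-1,-b)$ (so $c_0=0$, $c_1=1$), and indeed \eqref{gosper-idi} then falls out of Lemma \ref{generalexpress-new} --- the two binomial terms recombine, via $\binom{c+1}{k}=\frac{c+1}{k}\binom{c}{k-1}$, into $\frac{am+bk}{k(am+b)}\binom{-am-b}{k-1}$ --- while \eqref{gosper-idii} follows from Lemma \ref{dl42-new} together with the closed form \eqref{recrelation-two-1}, which after the harmless sign normalization $F=-g$ simplifies to $\frac1k\binom{am+bk}{k-1}$; I checked both reductions. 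So you identify the same underlying functional equation, but you invert it differently: instead of the paper's recurrence apparatus for $\lambda_n(s)$ (Lemmas \ref{recrelation-one} and \ref{recrelation-two}, Proposition \ref{twocase}), you resum \eqref{gosper-idii} directly by a Fuss--Catalan/Lagrange--B\"urmann summation, $\sum_{k\ge1}\frac1k\binom{am+bk}{k-1}u^k=\frac{(1+W(u))^{am+1}-1}{am+1}$ with $W=u(1+W)^b$, and then absorb the degree-dependent exponent $am+1$ with your diagonal identity --- which, note, is nothing other than Lemma \ref{lag} applied with $G=A$ and $\phi=A^{a}$, so your entire argument runs on the single Lemma \ref{lag}. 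Your route buys a short, recurrence-free, closed-form proof, and it covers $a=0$, which the paper's framework excludes since it requires $p\neq0$; the paper's route buys uniformity, handling general $m$-term right-hand sides where no closed form for $\lambda_n(s)$ exists and the Mina-polynomial machinery takes over. Both directions of your computation check out in detail: the contiguous relation $(k-1)\binom{-am-b}{k-1}=-(am+b)\binom{-am-b-1}{k-2}$ does split the coefficient of \eqref{gosper-idi} as you claim, and $\frac1m[t^{m-1}]f^jf'=\frac{1}{j+1}[t^m]f^{j+1}$ matches the Lagrange form term by term without introducing any new parameter divisions.

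Three small repairs. First, your intermediate division by $am+1$ collapses for $a=-1$, $m=1$ (the resummed series degenerates to $\log(1+W)$); just verify $m=1$ directly, where both systems read $z_1(b)=x_1$. Second, the density reduction to $x_1\neq0$ is unnecessary: $t(1+f(t))^{a}\in\mathcal L_1$ automatically, and the expansion \eqref{bellnewone} holds coefficientwise for any $f$ with zero constant term. Third, to upgrade your two one-way computations to the claimed equivalences you should record the triangularity explicitly: the $k=1$ terms of \eqref{gosper-idi} and \eqref{gosper-idii} are $x_m$ and $z_m(b)$ respectively, and the target relation determines either of $f,g$ from the other (compose with the inverse of $y(t)=t(1+f(t))^{a}$, which exists by Lemma \ref{inverxinzhi}), so uniqueness closes both ``if and only if'' loops; likewise $f=W(g(y))$ follows from the target by the uniqueness of the formal solution of $V=u(1+V)^{b}$. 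The residual ill-definedness of \eqref{gosper-idi} when $am+b=0$ for some $m\geq1$ is inherited from the statement itself, not from your argument: for $k\ge2$ the factor $am+b$ inside $\binom{-am-b}{k-1}$ cancels the denominator, and the $k=1$ coefficient is understood to be $1$.
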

In the above theorem and in what follows, we use  the notation
${t\choose n}$ to denote the generalized binomial coefficients $(t)_n/n!$ and
$(t)_n$ to  the usual  falling factorial $t(t-1)\cdots(t-n+1).$

Motivated by  Birmajer et al.'s result,  the first author \cite{wang} established the following nonlinear inverse relation.
\begin{dl}[{\rm \cite[Theorem 5]{wang}}]\label{mainewthm} Let $B_{n,k}(x_1,x_2,\ldots,x_{n-k+1})$ denote the Bell polynomials as above. For any integers $m\geq 1$ and $a,b\in\mathbb{C}$, and
 any sequence $(x_m)_{m\geq 1}$, the system of nonlinear  relations
 \begin{align}
y_m(b)=\displaystyle\frac{1}{am+b}\sum_{k=1}^m{-am-b\choose k}B_{m,k}(x_1,x_2,\ldots,x_{m-k+1})\label{uuu}
\end{align}
is equivalent to the system of nonlinear  relations
\begin{align}
x_m=\displaystyle\frac{1}{am+1}
\sum_{k=1}^m{-(am+1)/b\choose k}b^kB_{m,k}(y_1(b),y_2(b),\ldots,y_{m-k+1}(b)). \label{uuuvvv}
\end{align}
\end{dl}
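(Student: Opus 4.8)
The plan is to turn both nonlinear systems into single coefficient identities in $\mathbb{C}[[t]]$ and then recognize them as the two faces of one Lagrange inversion. Write $f(t)=\sum_{n\ge1}x_nt^n$ and $h(t)=\sum_{n\ge1}y_n(b)t^n$, and let $[t^m]$ denote the coefficient functional of the introduction. By Lemma~\ref{final} one has $B_{m,k}(x_1,\dots,x_{m-k+1})=[t^m]f(t)^k$, so the binomial theorem collapses the sum in (\ref{uuu}) into
\[
(am+b)\,y_m(b)=[t^m]\,(1+f(t))^{-am-b},
\]
and the same device turns (\ref{uuuvvv}) into $(am+1)x_m=[t^m](1+bh(t))^{-(am+1)/b}$. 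The decisive feature is that the exponent carries a term $-am$ matched exactly against the extraction $[t^m]$; this ``$n$ in the exponent against $t^n$'' pattern is the hallmark of the Lagrange inversion formula, and disposing of it is the whole game.

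Second, I would introduce the Lagrange kernel that manufactures the $-am$. Put $\phi(t)=(1+f(t))^{-a}$, which lies in $\mathcal{L}_0$ since $\phi(0)=1$, and let $w=w(t)\in\mathcal{L}_1$ be determined by $w=t\,\phi(w)=t(1+f(w))^{-a}$; by Lemma~\ref{inverxinzhi} its composite inverse is $t=w(1+f(w))^{a}$. Applying the Lagrange--B\"urmann form $[t^m]F(w)=\frac1m[t^{m-1}]\big(F'(t)\phi(t)^m\big)$ to $F(w)=(1+f(w))^{-b}-1$, and then using the two bookkeeping identities $f'(t)(1+f(t))^{-am-b-1}=\frac{-1}{am+b}\frac{d}{dt}(1+f(t))^{-am-b}$ and $[t^{m-1}]\Psi'(t)=m[t^m]\Psi(t)$, the right-hand side telescopes to $\frac{b}{am+b}[t^m](1+f)^{-am-b}=b\,y_m(b)$. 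Summing over $m$ produces the master identity
\[
1+b\,h(t)=(1+f(w(t)))^{-b},\qquad w=t(1+f(w))^{-a}.
\]

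Third, from this master identity I would read off (\ref{uuuvvv}). Raising to the power $-(am+1)/b$ turns the left side into $(1+bh)^{-(am+1)/b}$ and the right side into $(1+f(w))^{am+1}$; extracting $[t^m]$ of the latter by the change of variable $t=w(1+f(w))^{a}$ (again Lagrange inversion, as a residue computation) cancels all the $(1+f)^{\pm am}$ factors and leaves $[s^m]\big((1+f(s))^{1-a}\frac{d}{ds}[s(1+f(s))^a]\big)=[s^m]\big(1+f(s)+asf'(s)\big)$, which equals $x_m+am\,x_m=(am+1)x_m$. This is exactly (\ref{uuuvvv}), establishing the implication (\ref{uuu})$\Rightarrow$(\ref{uuuvvv}).

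Finally, for the full equivalence and the loose ends. The master identity is visibly invertible, via $w=t(1+bh)^{a/b}$ and $1+f(w)=(1+bh)^{-1/b}$, so it is a bijection between $(x_n)$ and $(y_n)$; the converse (\ref{uuuvvv})$\Rightarrow$(\ref{uuu}) then comes for free from the substitution $(a,b,f,(y_n))\longmapsto(a/b,\,1/b,\,bh,\,(bx_n))$, under which (\ref{uuuvvv}) becomes a relation of exactly the shape (\ref{uuu}), so the already-proved forward implication applied to the substituted data returns (\ref{uuu}). \textbf{Main obstacle.} The crux, and the step most likely to go wrong, is the Lagrange inversion of the second paragraph: one must choose $\phi=(1+f)^{-a}$ so that $\phi^m$ generates the offending $-am$, choose $F=(1+f)^{-b}-1$, and run the derivative-and-telescoping bookkeeping without a sign or index slip; the residue computation yielding (\ref{uuuvvv}) is then routine. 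The constraints $am+b\ne0$ and $b\ne0$ used en route are removed at the end by the polynomial-identity principle, since both (\ref{uuu}) and (\ref{uuuvvv}) are polynomial in $a,b$ (and in the $x_n$), so the equivalence proved on the generic locus holds identically; to invoke Lemma~\ref{final} one may likewise first assume $x_1\ne0$ and extend.
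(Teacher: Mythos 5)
Your proposal is correct, and I verified its key computations: the coefficient encodings $(am+b)\,y_m(b)=[t^m](1+f(t))^{-am-b}$ and $(am+1)\,x_m=[t^m](1+bh(t))^{-(am+1)/b}$, the Lagrange--B\"urmann step yielding the master identity $1+bh(t)=(1+f(w(t)))^{-b}$ with $w=t(1+f(w))^{-a}$, the residue computation $[t^m](1+f(w))^{am+1}=[s^m]\bigl(1+f(s)+asf'(s)\bigr)=(am+1)x_m$, and the homogeneity $b^kB_{m,k}(y_1,y_2,\ldots)=B_{m,k}(by_1,by_2,\ldots)$ that makes the duality substitution work. Note, however, that the paper itself contains no proof of Theorem \ref{mainewthm}: it is quoted from \cite[Theorem 5]{wang} as motivation, and the paper's own proofs concern the general Theorem \ref{333-new} and its specializations. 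Measured against that machinery, your argument coincides with the paper's in its first half and genuinely diverges in its second. Your master identity is exactly the functional equation \eqref{equationknown-gen} specialized to two terms, $p=-a$, $(a_1,q_1)=(1,-b)$, $(a_2,q_2)=(-1,0)$, with the roles recast as $F=bh$ and $\phi=1+f$; accordingly your forward step reproduces the computation of Lemma \ref{generalexpress-new}, and your $w$ is the $w$ of Lemma \ref{importantyl-new}. But where the paper then expands $w^s$ in powers of $F$ and lands on the recursively defined coefficients $\lambda_n(s)$ (Lemma \ref{dl42-new}, Lemma \ref{recrelation-two}), which in special cases must be solved in closed form and for $m=3$ only via Mina polynomials, you observe that in this two-term case the reverse direction is again a single Lagrange inversion whose exponents collapse --- the kernel $(1+f)^{-am}$ cancels $(1+f)^{am+1}$ down to $(1+f)^{1-a}$ --- and you obtain the converse implication for free from the self-duality $(a,b,f,(y_n))\mapsto(a/b,1/b,bh,(bx_n))$ rather than from any recursion. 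The trade-off is clear: the paper's route scales to arbitrary $m$, where no closed form for $\lambda_n(s)$ exists; yours exploits the special structure of this particular theorem to get explicit binomial coefficients in both directions and a cleaner equivalence argument. Your two closing patches are both legitimate and genuinely needed: $\frac{1}{am+b}\binom{-am-b}{k}=\frac{(-1)^k}{k!}\prod_{j=1}^{k-1}(am+b+j)$ and $b^k\binom{-(am+1)/b}{k}=\frac{1}{k!}\prod_{j=0}^{k-1}\bigl(-(am+1)-jb\bigr)$ are polynomials in $a$ and $b$, so the generic restrictions $b\neq 0$ and $am+b\neq 0$ are removable by polynomial identity, and the hypothesis $x_1\neq 0$ in Lemma \ref{final} is likewise inessential, since \eqref{bellnewone} holds for every $f$ with $f(0)=0$.
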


 Almost at the same time,   Birmajer et al. \cite{birmajer2} found another general and more beautiful nonlinear inverse relation for the Bell polynomials. For future reference, we now reformulate it in the form
\begin{dl}[{\rm \cite[Corollary 2.3]{birmajer2}}]\label{333-old} Let $c=r-q\neq 0$, $pqr\neq 0$. Then
\begin{align}
x_n&=\frac{1}{c}\sum_{k=1}^n\bigg(\frac{q}{q+np}\binom{q+np}{k}
-\frac{r}{r+np}\binom{r+np}{k}\bigg)B_{n,k}(y_1,y_2,\ldots,y_{n-k+1})\label{xnyn}
\end{align}
if and only if
\begin{align}
y_n&=-\sum_{k=1}^n\frac{1}{k!}\prod_{j=1}^{k-1}\big(np+k q+cj-1\big)B_{n,k}(x_1,x_2,\ldots,x_{n-k+1}).
\label{results}\end{align}
\end{dl}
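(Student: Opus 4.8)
The plan is to pass to ordinary generating functions and apply the Lagrange inversion formula three times: once to collapse \eqref{xnyn} into a functional equation, and twice more to invert that equation and read off \eqref{results}. Write $x(t)=\sum_{n\ge1}x_nt^n$ and $y(t)=\sum_{n\ge1}y_nt^n$; by Lemma \ref{final} one has $[t^n]y(t)^k=B_{n,k}(y_1,\ldots,y_{n-k+1})$, which vanishes for $k>n$. Since $\sum_{k\ge1}\binom{a+np}{k}y(t)^k=(1+y(t))^{a+np}-1$ for any scalar $a$ and the terms with $k>n$ do not affect $[t^n]$, relation \eqref{xnyn} is equivalent to
\begin{equation}
x_n=\frac1c\left(\frac{q}{q+np}[t^n](1+y(t))^{q+np}-\frac{r}{r+np}[t^n](1+y(t))^{r+np}\right),\qquad n\ge1.
\end{equation}

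The device that removes the $n$-dependent exponents is the identity
\begin{equation}
\frac{a}{a+np}\,[t^n](1+y(t))^{a+np}=[t^n]\bigl(1+y(w(t))\bigr)^{a},\qquad w=t\bigl(1+y(w)\bigr)^{p},
\end{equation}
which I would obtain from the Lagrange inversion formula $[t^n]H(w)=\frac1n[\lambda^{n-1}]H'(\lambda)\phi(\lambda)^n$ by taking $\phi(\lambda)=(1+y(\lambda))^{p}$ and $H(\lambda)=(1+y(\lambda))^{a}$: the factor $a(1+y)^{a-1}(1+y)'(1+y)^{np}$ reassembles as $\frac{a}{a+np}\frac{d}{d\lambda}(1+y)^{a+np}$, and $[\lambda^{n-1}]\frac{d}{d\lambda}=n[\lambda^{n}]$ cancels the remaining $n$. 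Summing the previous display against $t^n$ (the constant terms cancel) converts \eqref{xnyn} into the single functional equation
\begin{equation}
x(t)=\frac1c\Bigl((1+y(w))^{q}-(1+y(w))^{r}\Bigr),\qquad w=t\bigl(1+y(w)\bigr)^{p}. \tag{FE}
\end{equation}

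It remains to invert (FE). Put $u(t):=1+y(w(t))$, so that $w(t)=t\,u(t)^{p}$ and $x(t)=\tfrac1c(u^{q}-u^{r})$; moreover $u=(1+y)\circ w$ gives $1+y=u\circ w^{\langle-1\rangle}$, whence $y_n=[s^n]u(w^{\langle-1\rangle}(s))$ for $n\ge1$. As $w^{\langle-1\rangle}$ inverts $s=t\,u(t)^{p}$, a second application of Lagrange inversion with $\Phi(\lambda)=u(\lambda)^{-p}$ yields
\begin{equation}
y_n=\frac1n[\lambda^{n-1}]u'(\lambda)u(\lambda)^{-np}=\frac{1}{1-np}[\lambda^{n}]u(\lambda)^{1-np},
\end{equation}
using $u'u^{-np}=\frac1{1-np}(u^{1-np})'$. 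Now $u=u(x)$ is the compositional inverse about $u=1$ of $v\mapsto\tfrac1c((1+v)^{q}-(1+v)^{r})$, so a third Lagrange inversion gives $\frac{1}{1-np}[x^{k}]u^{1-np}=\frac1k[v^{k-1}](1+v)^{-np}\bigl(cv/((1+v)^{q}-(1+v)^{r})\bigr)^{k}$, and pairing $[x^k]u^{1-np}$ with $[\lambda^n]x(\lambda)^k=B_{n,k}(x_1,\ldots,x_{n-k+1})$ from Lemma \ref{final} turns the previous display into a Bell-polynomial expansion in the $x_m$.

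The computation then comes down to evaluating that last coefficient. Since $(1+v)^{q}-(1+v)^{r}=-(1+v)^{q}\bigl((1+v)^{c}-1\bigr)$, it is the residue $\frac{c^k}{k}\operatorname{Res}_{v=0}(1+v)^{-np-qk}(1-(1+v)^{c})^{-k}$; the substitution $(1+v)^{c}=1+\tau$ linearises the denominator to $(-\tau)^{k}$ and collapses the residue to $\frac{(-c)^{k}}{ck}\binom{\alpha}{k-1}$ with $\alpha=\frac{1-np-qk-c}{c}$, and clearing the powers of $c$ while flipping the $k-1$ signs produces exactly $-\frac1{k!}\prod_{j=1}^{k-1}(np+kq+cj-1)$, i.e.\ \eqref{results}. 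Finally the $k=1$ terms show $x_n=-y_n+\cdots$ and $y_n=-x_n+\cdots$, so \eqref{xnyn} and \eqref{results} are triangular polynomial maps with diagonal $-1$; the one-sided implication just established therefore forces them to be mutual inverses, which is the asserted equivalence. I expect the last residue evaluation to be the crux—choosing the substitution $(1+v)^{c}=1+\tau$ that turns $1-(1+v)^{c}$ into a monomial and recognising the emerging binomial coefficient as the stated product—while the degenerate parameter values (where $y_1=0$ or $a+np,\,1-np$ vanish) are disposed of at the end by noting that, for each fixed $n$, both relations are polynomial identities in the finitely many variables and in $p,q,r$, hence hold identically once verified on the Zariski-dense set where the inversions are legitimate.
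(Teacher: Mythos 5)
Your proposal is correct, and it reaches \eqref{results} by a genuinely different route at the decisive step. Your first two Lagrange inversions coincide with the paper's: your functional equation (FE) is exactly \eqref{xrmaadded} specialized to $m=2$, $a_1=-a_2=1/c$, $q_1=q$, $q_2=r$ (so your collapse of \eqref{xnyn} is Lemma \ref{generalexpress-new} read backwards), and your formula $y_n=\frac{1}{1-np}[\lambda^n]u(\lambda)^{1-np}$ is Lemma \ref{dl42-new} verbatim, your $u$ being the paper's $w^{-1/p}$. Where the paper then expands powers of the paper's $w$ in powers of $F$ with undetermined coefficients $\lambda_n(s)$, extracts the recurrence \eqref{hhhhh} from the functional equation combined with the convolution identity \eqref{recresults-one}, and proves the closed form \eqref{recrelation-two-1} by iteration and induction (Proposition \ref{twocase}), you bypass the $\lambda_n(s)$ machinery altogether: a third Lagrange inversion applied to $x=\frac1c\big((1+v)^q-(1+v)^r\big)$ reduces everything to a single residue, and the substitution $(1+v)^c=1+\tau$ monomializes $1-(1+v)^c$, giving $\frac{(-c)^k}{ck}\binom{\alpha}{k-1}$ with $\alpha=\frac{1-np-qk-c}{c}$, which indeed simplifies to $-\frac{1}{k!}\prod_{j=1}^{k-1}(np+kq+cj-1)$ — I checked the signs and the powers of $c$, and the bookkeeping is right. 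Your route buys a one-shot closed form with no recurrence or induction, and it computes only the specialization $s=n-1/p$ actually needed; the paper's route buys $\lambda_n(s)$ for general $s$, which is what it reuses for the $m=3$ case — note that your substitution trick is special to two exponents, since with three terms $a_1u^{q_1}+a_2u^{q_2}+a_3u^{q_3}$ no single change of variable monomializes the relevant factor, which is precisely why the paper resorts to the Mina-polynomial apparatus there. Two details you supply that the paper leaves implicit are also sound: the triangularity argument (the $k=1$ coefficients equal $-1$ on both sides, so the one-sided identity $B\circ A=\mathrm{id}$ established by the Lagrange computation, together with bijectivity of the triangular map $A$, forces $A\circ B=\mathrm{id}$ and hence the stated equivalence), and the Zariski-density disposal of the degenerate parameter values $q+np=0$, $r+np=0$, $np=1$ after rewriting $\frac{q}{q+np}\binom{q+np}{k}=\frac{q}{k}\binom{q+np-1}{k-1}$ so that all coefficients become polynomial in $p,q,r$.
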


 This result  inspires us to consider the following research problem.

\begin{wenti}\label{wentinew}
For any integers $m\geq 1$, let $p$, $(a_k)_{k=1}^m$, and $(q_k)_{k=1}^m$ be $2m+1$ complex numbers subject to  $$p\neq0,~\sum_{k=1}^m a_k=0,~\sum_{k=1}^m a_kq_k\neq 0.$$  Assume  further that
$F(t)=\sum_{n\geq 1}x_nt^n$ and $\phi(t)=1+\sum_{n\geq 1}y_nt^n$  satisfy
 \begin{align}
F(t/\phi^p(t))=\sum_{k=1}^m a_k\phi^{q_k}(t).\label{equationknown-gen}
\end{align}
Find any relationship between the sequences $(x_n)_{n\geq 1}$ and $(y_n)_{n\geq1}$.
\end{wenti}

In the sequel, the first author \cite{wang} offered the following positive solution to this problem without proof.

\begin{dl}[{\rm \cite[Theorem 14]{wang}}]\label{333-new}
With the same notation and assumptions as above. Then  the system of nonlinear  relations
\begin{align}
x_n=\sum_{k=1}^n \biggl(\sum_{i=1}^m \frac{a_iq_i}{np+q_i}{np+q_i\choose k}\biggr)B_{n,k}(y_1,y_2,\ldots,y_{n-k+1})\label{eqone}
\end{align}
is equivalent to the system of nonlinear  relations
\begin{align}
y_n=\sum_{k=1}^n\frac{\lambda_k(-1/p+n)}{1-pn}B_{n,k}(x_1,x_2,\ldots,x_{n-k+1}),\label{eqtwo-gen}
\end{align}
where $\lambda_n(s)$ are defined recursively by
\begin{align}(n+1)\lambda_{n+1}(s)\sum_{k=1}^m a_kq_k&=-ps\lambda_n(s)-\sum_{k=1}^m a_kq_k\sum_{j=1}^{n}\lambda_{n+1-j}(-q_k/p)j\lambda_{j}(s)
\end{align}
with $\lambda_0(s)=1.$
\end{dl}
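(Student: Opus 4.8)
The plan is to read \eqref{eqone} as the easy ``forward'' half and \eqref{eqtwo-gen} as the ``backward'' half, extracting each from the functional equation \eqref{equationknown-gen} by the classical Lagrange inversion formula, and then to close the loop by a triangularity argument. Write $\alpha$ for the map $(y_n)\mapsto(x_n)$ encoded by \eqref{eqone} and $\beta$ for the map $(x_n)\mapsto(y_n)$ encoded by \eqref{eqtwo-gen}; it will suffice to realize both from \eqref{equationknown-gen} and to observe that $\alpha$ is a bijection.

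First I would record the two instances of Lagrange inversion needed. With $\phi(t)=1+\sum_{n\ge1}y_nt^n\in\mathcal{L}_0$ and $s(t)=t/\phi^p(t)=t(1-py_1t+\cdots)\in\mathcal{L}_1$, Lemma \ref{inverxinzhi} furnishes a compositional inverse $t=t(s)$ with $t=s\,\phi^p(t)$, and \eqref{equationknown-gen} becomes $F(s)=\sum_{i=1}^m a_i\phi^{q_i}(t(s))$. Applying $[s^n]H(t(s))=\frac1n[t^{n-1}]H'(t)\phi^{np}(t)$ with $H=\phi^{q_i}$ collapses $H'\phi^{np}=\frac{q_i}{np+q_i}(\phi^{np+q_i})'$, so that $[s^n]\phi^{q_i}(t(s))=\frac{q_i}{np+q_i}[t^n]\phi^{np+q_i}(t)$; expanding $\phi^{np+q_i}=(1+\sum_{j\ge1}y_jt^j)^{np+q_i}$ by the binomial series and reading coefficients through Lemma \ref{final} gives exactly \eqref{eqone}. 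This display is triangular in $(y_n)$ with diagonal entry $\sum_i a_iq_i\ne0$ (the $k=1$ term, since $B_{n,1}=y_n$), so $\alpha$ is a bijection; hence $\beta\circ\alpha=\mathrm{id}$ will force \eqref{eqone}$\Leftrightarrow$\eqref{eqtwo-gen}, and it remains only to show that every pair linked by \eqref{equationknown-gen} satisfies \eqref{eqtwo-gen}.

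The heart of the matter is decoding $\lambda_n(s)$. I would set $L(t,s)=\sum_{n\ge0}\lambda_n(s)t^n$ and show that the defining recurrence, once its convolution is rewritten via $\sum_{j=1}^n\lambda_{n+1-j}(-q_k/p)j\lambda_j(s)=[t^{n+1}]\big(L(t,-q_k/p)\,t\,\partial_tL(t,s)\big)-(n+1)\lambda_{n+1}(s)$, is equivalent to the single formal identity $M(t)\,\partial_tL(t,s)=-ps\,L(t,s)$, where $M(t)=\sum_{k}a_kq_k\,L(t,-q_k/p)$. The decisive point is that the summand $(n+1)\lambda_{n+1}(s)\sum_k a_kq_k$ reappears and cancels, leaving this clean first-order relation. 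Integrating yields $L(t,s)=E(t)^s$ with $E(0)=1$ and $E'/E=-p/M$, and one checks that the \emph{same} $E$ is cut out by the implicit equation $w=\sum_{k}a_kE(w)^{-q_k/p}$ (differentiation of which returns precisely $E'=-pE/M$ together with $E(0)=1$). Thus $\lambda_n(s)=[t^n]E(t)^s$.

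Finally I would identify $E$ geometrically. Putting $\Omega(s)=\phi^{-p}(t(s))$, the functional equation reads $F(s)=\sum_k a_k\Omega(s)^{-q_k/p}$, which is exactly the implicit equation for $E$ evaluated at $w=F(s)$; hence $\Omega(s)=E(F(s))$. Since $s=t(s)\phi^{-p}(t(s))$ gives $\Omega(s)=s/t(s)$, we get $\Omega(s)^{\,n-1/p}=\phi(t(s))\cdot\phi^{-np}(t(s))=\phi^{\,1-np}(t(s))$, a pure function of the Lagrange variable. Applying Lagrange inversion once more with $H=\phi^{1-np}$, the factor $\phi^{np}$ cancels and $H'\phi^{np}=(1-np)\phi'$, so $[s^n]\Omega(s)^{n-1/p}=\frac{1-np}{n}[t^{n-1}]\phi'(t)=(1-pn)y_n$. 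On the other hand $\sum_{k\ge0}\lambda_k(\sigma)F^k(s)=E(F(s))^\sigma=\Omega(s)^\sigma$, so Lemma \ref{final} turns $[s^n]\Omega(s)^\sigma$ into $\sum_{k=1}^n\lambda_k(\sigma)B_{n,k}(x_1,\ldots,x_{n-k+1})$; taking $\sigma=n-1/p$ and dividing by $1-pn$ reproduces \eqref{eqtwo-gen}. The main obstacle I anticipate is the third paragraph: correctly massaging the nonlinear convolution recurrence into the product form $M\,\partial_tL=-ps\,L$ and recognizing the implicit equation $w=\sum_k a_kE(w)^{-q_k/p}$ as the bridge between the abstract $\lambda_n(s)$ and the analytic object $\phi^{-p}(t(s))$; the two Lagrange-inversion computations themselves are short once this identification is in hand.
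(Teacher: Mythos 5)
Your argument is correct, and it shares the paper's skeleton --- both halves come from the functional equation \eqref{equationknown-gen} via exactly the two Lagrange-inversion computations the paper performs (your forward computation is Lemma \ref{generalexpress-new} verbatim, and your $\Omega(s)=\phi^{-p}(t(s))=s/t(s)$ is precisely the paper's $w$, so that $\Omega(s)^{n-1/p}=\phi^{1-np}(t(s))$ repackages Lemma \ref{importantyl-new} and your final coefficient extraction is Lemma \ref{dl42-new}) --- but you handle $\lambda_n(s)$ in the reverse direction, and that is a genuine difference. The paper \emph{defines} $\lambda_n(s)$ by the expansion $w^s(t)=1+\sum_{n\ge1}\lambda_n(s)F^n(t)$ in \eqref{phidefne-gen-1-new}, which requires $(F^n)_{n\ge0}$ to be a basis (i.e.\ $x_1=c_1y_1\neq0$), and only afterwards derives the recurrence \eqref{recresults-two} by differentiating (Lemma \ref{recrelation-two}); you instead take the recurrence --- which is how the theorem statement actually defines $\lambda_n(s)$ --- convert it to the first-order relation $M(t)\,\partial_tL(t,s)=-ps\,L(t,s)$ (this is the paper's identity \eqref{phidefne-gen-1} read abstractly in the variable $F$), solve it as $L(t,s)=E(t)^s$, and pin down $E$ by the implicit equation $t=\sum_k a_kE(t)^{-q_k/p}$, which is the transformed functional equation \eqref{xrmaadded} in disguise. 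Your route buys two things the paper leaves implicit: fidelity to the statement (the paper never explicitly verifies that the recurrence-defined $\lambda_n(s)$ coincide with its expansion-defined ones, which is the glue between Lemma \ref{dl42-new} and Lemma \ref{recrelation-two}), and a clean logical closure of the equivalence via the triangularity $x_n=c_1y_n+(\hbox{polynomial in }y_1,\ldots,y_{n-1})$, where the paper only asserts that its two lemmas ``give the complete proof''; moreover your uniqueness mechanism works even when $x_1=0$, where the paper's basis claim degenerates. The paper's expansion-first route buys the auxiliary recurrences \eqref{recresults-one} and \eqref{recresults-three}, which are what drive the applications in Section 3. Two small points you should spell out in a final write-up: the step ``hence $\Omega(s)=E(F(s))$'' needs the uniqueness of the fps solution $X$ with $X(0)=1$ of $F(s)=\sum_k a_kX(s)^{-q_k/p}$, which holds because $v\mapsto\sum_k a_k(1+v)^{-q_k/p}=-\tfrac{c_1}{p}v+O(v^2)$ lies in $\mathcal{L}_1$ by $c_0=0$, $c_1\neq0$; and the division by $1-pn$ tacitly assumes $pn\neq1$, a caveat your proof shares with the paper's.
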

 The theme of the present paper is to show Theorem \ref{333-new} in full details. Our main  ingredient is  the classical Lagrange inversion formula, restated as follows.

\begin{yl}[The Lagrange inversion formula: {\rm\cite[p.150, Theorem C, Theorem D]{comtet}}]\label{lag}
 Let $\phi(t)\in \mathcal{L}_0$. Then for any fps $G(t)$, it always holds that \begin{align}
G(t)=\sum_{n=0}^\infty a_n\bigg(\frac{t}{\phi(t)}\bigg)^n,\label{lagformula}
\end{align}
where
\begin{align}
a_n=\frac{1}{n}\boldsymbol\lbrack t^{n-1}\boldsymbol\rbrack G'(t)\phi^n(t).\label{lagformula-coeff}
\end{align}
Hereafter, the prime denotes the formal differentiation with respect to $t$.
\end{yl}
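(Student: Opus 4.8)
The plan is to prove the formula by the formal residue calculus on Laurent series, the hypothesis $\phi(t)\in\mathcal{L}_0$ being used precisely to guarantee that $w(t):=t/\phi(t)$ is an order-one series. First I would record the setup: since $\phi(0)\neq0$, the series $w(t)=t/\phi(t)$ lies in $\mathcal{L}_1$, so by Lemma~\ref{inverxinzhi} it has a composite inverse and, more usefully, each power $w(t)^n$ has order exactly $n$. Consequently the family $\{w(t)^n\}_{n\ge0}$ is a formal basis: matching coefficients of $t^0,t^1,t^2,\dots$ in $G(t)=\sum_{n\ge0}a_nw(t)^n$ gives a lower-triangular system with nonzero diagonal, so the coefficients $a_n$ exist and are uniquely determined. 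The whole problem thus reduces to producing the closed form (\ref{lagformula-coeff}) for these $a_n$.

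Next I would set up the two residue facts I need. Writing $\mathrm{res}\,f=\boldsymbol\lbrack t^{-1}\boldsymbol\rbrack f$ for a formal Laurent series $f$, the first fact is that $\mathrm{res}(f')=0$ for every $f$, since $\frac{d}{dt}t^{k}=k\,t^{k-1}$ never contributes a $t^{-1}$ term with nonzero coefficient. The second fact is the normalization $\mathrm{res}\bigl(w^{-1}w'\bigr)=1$: because $w(t)=c_1t+c_2t^2+\cdots$ with $c_1\neq0$, one computes $w^{-1}w'=t^{-1}+(\text{ordinary power series})$, whose residue is $1$. Combining these via $w^{k}w'=\frac{1}{k+1}(w^{k+1})'$ for $k\neq-1$ yields the single clean identity $\mathrm{res}\bigl(w^{k}w'\bigr)=\delta_{k,-1}$ for every integer $k$.

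With these in hand, the extraction is short. Since $\phi=t/w$, I can rewrite the target coefficient as a residue:
\[
\boldsymbol\lbrack t^{n-1}\boldsymbol\rbrack\bigl(G'(t)\phi(t)^n\bigr)=\mathrm{res}\bigl(t^{-n}G'(t)\phi(t)^n\bigr)=\mathrm{res}\bigl(G'(t)\,w(t)^{-n}\bigr).
\]
Differentiating the already-justified expansion $G=\sum_m a_m w^m$ term by term gives $G'=\sum_m m\,a_m\,w^{m-1}w'$, hence $G'w^{-n}=\sum_m m\,a_m\,w^{m-n-1}w'$. Since $w$ has order one, the $m$-th summand is a Laurent series of order $m-n-1$, an order that tends to $+\infty$ with $m$; the family is therefore summable in the formal Laurent topology and the coefficient of any fixed power of $t$---in particular the residue---is a finite sum, legitimizing term-by-term extraction. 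By $\mathrm{res}(w^{k}w')=\delta_{k,-1}$ only $m=n$ contributes, giving $\mathrm{res}(G'w^{-n})=n\,a_n$, whence $a_n=\frac{1}{n}\boldsymbol\lbrack t^{n-1}\boldsymbol\rbrack(G'(t)\phi(t)^n)$ for $n\ge1$, which is precisely (\ref{lagformula-coeff}).

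The part demanding the most care---and the only genuine obstacle---is the rigorous formal-residue bookkeeping: verifying $\mathrm{res}(f')=0$ for every Laurent series $f$, establishing the normalization $\mathrm{res}(w^{-1}w')=1$, and justifying the term-by-term residue extraction from the infinite Laurent series $\sum_m m\,a_m\,w^{m-n-1}w'$. Each of these hinges on $w$ having order exactly one, which is guaranteed by $\phi\in\mathcal{L}_0$; once one records that $w^{m-n-1}w'$ has order $m-n-1\to\infty$, every fixed coefficient of the limiting series is a finite sum and the interchange is automatic. A final minor point to flag is the index $n=0$, where the stated formula with the factor $1/n$ degenerates and must instead be read off directly as $a_0=G(0)$ by evaluating the expansion at $t=0$.
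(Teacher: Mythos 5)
Your proof is correct, but note that the paper itself offers no proof to compare against: Lemma \ref{lag} is quoted verbatim from Comtet (p.~150, Theorems C and D) and is used as a black box throughout. So your residue-calculus argument is a self-contained substitute rather than a variant of anything in the paper. On its own merits it is complete: the triangularity argument (since $w=t/\phi$ has order exactly one, $[t^k]w^n$ vanishes for $k<n$ and equals $\phi(0)^{-n}\neq0$ for $k=n$) settles existence and uniqueness of the $a_n$; the two residue facts $\mathrm{res}(f')=0$ and $\mathrm{res}(w^{-1}w')=1$, merged into $\mathrm{res}(w^kw')=\delta_{k,-1}$, are correctly established, including the negative-$k$ case via $w^kw'=\frac{1}{k+1}(w^{k+1})'$ in the Laurent ring; and the summability point you flag (the $m$-th summand $m\,a_m w^{m-n-1}w'$ has order $m-n-1\to\infty$, since $w'$ has order zero) is exactly what legitimizes term-by-term residue extraction, which many informal treatments gloss over. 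Your handling of the degenerate index $n=0$ (read $a_0=G(0)$ directly, since the stated $\frac{1}{n}$ formula is meaningless there) matches the implicit convention in \eqref{lagformula}--\eqref{lagformula-coeff}. Compared with Comtet's cited treatment, which derives the expansion from the classical Lagrange series for the compositional inverse, your route through formal Laurent residues (in the style of Jacobi's residue proof, as in Gessel's survey) is more elementary in its prerequisites and has the advantage of making the uniqueness and convergence bookkeeping explicit; what it gives up is the connection to the inverse-series formulation that Lemma \ref{inverxinzhi} hints at --- indeed you invoke Lemma \ref{inverxinzhi} but never actually use the composite inverse, so that sentence could be deleted without loss.
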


Furthermore, we will investigate  the cases  $m=2$ and $m=3$ of Theorem \ref{333-new}. The former case gives rise to a short proof of  Birmajer et al.'s result, i.e., Theorem \ref{333-old}. The latter  leads us to  a new nonlinear inverse relation as below.
\begin{dl}\label{333-new-new-000}
With the same notation and assumptions as above,  the system of nonlinear  relations
\begin{align}
x_n=\sum_{k=1}^n \biggl(\sum_{i=1}^3 \frac{a_iq_i}{np+q_i}{np+q_i\choose k}\biggr)B_{n,k}(y_1,y_2,\ldots,y_{n-k+1})\label{eqone-new}
\end{align}
is equivalent to the system of nonlinear  relations
\begin{align}
y_n=-\sum_{k=1}^n\frac{f_k(-1/p+n)}{k!(-c_1)^k}B_{n,k}(x_1,x_2,\ldots,x_{n-k+1}),\label{eqtwo-gen-new}
\end{align}
where we define
\begin{align}f_n(t)=\sum_{k=0}^{n-1}
\frac{C_{n,k}(c_1,c_2,\ldots,c_{n-k})}{c_1^{n-1-k}}(pt)^k\label{two-great-new}
\end{align}
where $C_{n,k}(x_1,x_2,\ldots,x_{n-k})$ is the Mina polynomial (see Definition \ref{mina}), and
\begin{align}c_k=a_1q_1^k+a_2q_2^k
+a_3q_3^k.\label{cccoefficient}\end{align}
\end{dl}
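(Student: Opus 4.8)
The plan is to deduce Theorem \ref{333-new-new-000} from the general result Theorem \ref{333-new} by specializing $m=3$ and then resolving the recursively defined coefficients $\lambda_k$ into the explicit Mina-polynomial form \eqref{two-great-new}. Since \eqref{eqone-new} is literally \eqref{eqone} with $m=3$, the forward half is immediate, and Theorem \ref{333-new} already guarantees that \eqref{eqone-new} is equivalent to \eqref{eqtwo-gen}. Hence everything reduces to the scalar identity
\begin{equation*}
\frac{\lambda_k(-1/p+n)}{1-pn}=-\frac{f_k(-1/p+n)}{k!(-c_1)^k},\qquad 1\le k\le n.
\end{equation*}
Writing $s=-1/p+n$, so that $1-pn=-ps$, this becomes the polynomial identity $\lambda_k(s)=ps\,f_k(s)/(k!(-c_1)^k)$ in the single indeterminate $s$, which I would verify for all $k$ at once by comparing generating functions in a bookkeeping variable $v$.

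First I would pin down a closed generating function for the $\lambda_k$. Setting $H(u)=\sum_{j\ge1}c_j u^j/j!$ with $c_j$ as in \eqref{cccoefficient}, the hypothesis $\sum_i a_iq_i\ne0$ forces $c_1\ne0$, so $H\in\mathcal{L}_1$ and its compositional inverse $R=H^{\langle-1\rangle}$ exists by Lemma \ref{inverxinzhi}. I claim that
\begin{equation*}
\sum_{k\ge0}\lambda_k(s)\,v^k=e^{-ps\,R(v)}.
\end{equation*}
This can be checked directly against the defining recurrence for $\lambda_k$: differentiating $e^{-psR(v)}$ and using $H'(R(v))R'(v)=1$ converts the convolution terms $\lambda_{n+1-j}(-q_k/p)$ into the coefficients of $H'$, reproducing the recursion (the base cases $\lambda_1(s)=-ps/c_1$ and $\lambda_2(s)=(ps)^2/(2c_1^2)+ps\,c_2/(2c_1^3)$ already confirm it). Equivalently, this generating function is exactly what emerges from the Lagrange-inversion proof of Theorem \ref{333-new}, where one shows $y_n=\frac{1}{1-pn}[v^n]e^{(1-pn)R(F)}$ with $F=\sum_n x_n v^n$, so that $\lambda_k(-1/p+n)=[v^k]e^{(1-pn)R(v)}$ after expanding by Lemma \ref{final}.

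With the generating function in hand, the heart of the argument is a second application of the Lagrange inversion formula, now to the inverse series $R$. Taking $\phi_0(u)=u/H(u)\in\mathcal{L}_0$ in Lemma \ref{lag}, so that $u/\phi_0(u)=H(u)=v$, and $G(u)=e^{-psu}$, I obtain
\begin{equation*}
\lambda_k(s)=[v^k]\,e^{-ps\,R(v)}=\frac{-ps}{k}\,[u^{k-1}]\,e^{-psu}\Bigl(\frac{u}{H(u)}\Bigr)^{k}.
\end{equation*}
Expanding $e^{-psu}=\sum_j(-ps)^ju^j/j!$ and reading off the coefficient of $u^{k-1}$ turns the right-hand side into a polynomial in $ps$ whose coefficient of $(ps)^j$ is a fixed multiple of $[u^{k-1-j}](u/H(u))^{k}$. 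Since $H(u)/u=c_1+\sum_{i\ge2}c_iu^{i-1}/i!$ has constant term $c_1$, the coefficients of the power $(u/H(u))^{k}=(H(u)/u)^{-k}$ are, by Definition \ref{mina}, exactly the Mina polynomials $C_{k,j}(c_1,\ldots,c_{k-j})$ with the expected powers of $c_1$ appearing in the denominator; note the argument count $k-j$ matches that in \eqref{two-great-new} precisely because one factor of $u$ has been absorbed. Matching the coefficient of $(ps)^j$ then identifies the bracketed expression with $f_k(s)$ and supplies the normalizing constant $-1/(k!(-c_1)^k)$, yielding \eqref{eqtwo-gen-new}. The main obstacle is this final identification: reconciling the signs, the factor $k!/k$, and the powers $c_1^{k-1-j}$ against $c_1^{2k-1-j}$ so that the Lagrange coefficients collapse onto $C_{k,j}$ exactly. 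I would neutralize it by taking Definition \ref{mina} in the form asserting that the coefficients of $(H(u)/u)^{-k}$ are these normalized Mina polynomials, reducing the last step to a direct comparison; the small cases $C_{2,0}=c_2$ and $C_{2,1}=1$ (so that $f_2(s)=c_2/c_1+ps$) serve as a sanity check on the normalization and, for $m=2$, recover Birmajer et al.'s Theorem \ref{333-old}.
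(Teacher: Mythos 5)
Your reduction to the scalar identity $\lambda_k(s)=ps\,f_k(s)/(k!(-c_1)^k)$ is exactly right, and your analytic machinery is sound and genuinely different from the paper's. The closed form $\sum_{k\ge0}\lambda_k(s)v^k=e^{-ps\,R(v)}$ with $R=H^{\langle-1\rangle}$ is correct: writing $u=-\frac1p\log w(t)$ one has $w^{-q_k/p}(t)=e^{q_ku}$, so \eqref{xrmaadded} together with $c_0=0$ gives $F=H(u)$, hence $w^s=e^{-psu}=e^{-psR(F)}$, which is \eqref{phidefne-gen-1-new}; your second application of Lemma \ref{lag} with $\phi_0(u)=u/H(u)\in\mathcal{L}_0$ then yields $\lambda_k(s)=\frac{-ps}{k}[u^{k-1}]e^{-psu}(u/H(u))^k$, an explicit coefficient formula the paper never writes down (and one that is uniform in $m$, not special to $m=3$). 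The paper instead stays at the level of recurrences: it converts \eqref{recresults-two} into the recurrence of Proposition \ref{prop3.4} for $f_n$, expands $f_n$ as a polynomial in $ps$, and turns \eqref{recresults-three} into the triangular linear system \eqref{coeffi-rec-1}--\eqref{coeffi-rec-2}, whose iterated solution is, by construction, the matrix-product entry of Definition \ref{mina}.

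However, your final step is a genuine gap, and you half-admit it. The theorem's $C_{n,k}$ are defined in Definition \ref{mina} as the entries $x_1^{2n-2-k}[A_{n,0}^{-1}A_{n,1}^{-1}\cdots A_{n,n-2}^{-1}]_{k+1,n}$ of a product of inverse matrices, \emph{not} as coefficients of $(H(u)/u)^{-k}$; so ``taking Definition \ref{mina} in the form asserting that the coefficients of $(H(u)/u)^{-k}$ are these normalized Mina polynomials'' is not a neutralization but precisely the unproved claim. Concretely, your route needs the bridge identity
\begin{align*}
C_{k,j}(c_1,c_2,\ldots,c_{k-j})=\frac{(-1)^{k+j+1}(k-1)!}{j!}\,c_1^{2k-1-j}\,[u^{k-1-j}]\Bigl(\frac{u}{H(u)}\Bigr)^{k},
\end{align*}
which is true (it checks against the tabulated values $C_{2,1}=1$, $C_{2,0}=c_2$, $C_{3,1}=3c_2$, $C_{3,0}=3c_2^2-c_1c_3$) but is proved neither in the paper nor in your argument, and sanity checks at $k\le 2$ do not certify the general signs and $c_1$-normalization. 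The repair is short and leads you back to the paper's mechanism: your Lagrange expansion exhibits $f_k(s)=\sum_j\chi_k(j)(ps)^j$ with explicit $\chi_k(j)$; substituting such a polynomial expansion into \eqref{recresults-three} produces the system \eqref{coeffi-rec-1}--\eqref{coeffi-rec-2}, which is upper triangular with diagonal entries $c_1\neq0$ and hence has a unique solution, and iterating the inversion reproduces verbatim the entry in \eqref{xxxxx-new} --- this is exactly the paper's Proposition \ref{recrelation-twotwo}. With that lemma supplied, your proof closes and is in fact more informative than the paper's, since it gives a closed coefficient formula for $\lambda_k(s)$ rather than only a recursive matrix description.
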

Our paper is planned as follows. The next section is devoted to the full proof for Theorem  \ref{333-new}, wherein  the coefficients $\lambda_n(s)$ is introduced and discussed in details. As further applications of Theorem  \ref{333-new} in the cases $m=2$ and $3$,  the proofs of Theorem \ref{333-old} and Theorem \ref{333-new-new-000} are presented in Section 3. Further, some combinatorial identities for $\lambda_n(s)$ are established.
\section{Proofs of the main results}
\subsection{The proof of Theorem \ref{333-new}}
Our proof of Theorem \ref{333-new} is composed of the following lemmas. At first, making use of the Lagrange inversion formula \eqref{lagformula}, it is easy to express $x_n$ in terms of $(y_n)_{n\geq 1}$.
\begin{yl}\label{generalexpress-new} Under the assumptions of {\bf Research Problem~}\ref{wentinew}. We have
\begin{align}
x_n=\sum_{k=1}^n \bigg(\sum_{i=1}^m \frac{a_iq_i}{np+q_i}\binom{np+q_i}{k}\bigg)
B_{n,k}(y_1,y_2,\ldots,y_{n-k+1}).\label{generalexpress-id-new}
\end{align}
\end{yl}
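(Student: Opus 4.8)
The plan is to read $x_n$ directly off the Lagrange inversion formula. Since $\phi(0)=1\neq0$, the series $\phi^p(t)$ lies in $\mathcal{L}_0$, so Lemma~\ref{lag} applies with $\phi^p(t)$ in the role of the weight series. Writing $G(t):=\sum_{i=1}^m a_i\phi^{q_i}(t)$, the functional equation \eqref{equationknown-gen} says precisely $G(t)=F\big(t/\phi^p(t)\big)$, and Lemma~\ref{lag} expands $G$ as
\[
G(t)=\sum_{n=0}^\infty c_n\Big(\frac{t}{\phi^p(t)}\Big)^n,\qquad c_n=\frac1n\,[t^{n-1}]\,G'(t)\phi^{np}(t)\quad(n\geq1).
\]
First I would note that the hypothesis $\sum_{i=1}^m a_i=0$ gives $G(0)=\sum_i a_i=0$, so the constant term vanishes; since $F\big(t/\phi^p(t)\big)=\sum_{n\geq1}x_n\big(t/\phi^p(t)\big)^n$ is itself an expansion in powers of $t/\phi^p(t)\in\mathcal{L}_1$ and such an expansion is unique, I may identify $c_n=x_n$ for every $n\geq1$.

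Next I would simplify the coefficient functional. Differentiating term by term gives $G'(t)=\sum_{i=1}^m a_iq_i\,\phi'(t)\phi^{q_i-1}(t)$, and the elementary identity $\phi'(t)\phi^{\alpha-1}(t)=\frac1\alpha\big(\phi^\alpha(t)\big)'$ with $\alpha=np+q_i$ lets me rewrite
\[
x_n=\frac1n\,[t^{n-1}]\sum_{i=1}^m\frac{a_iq_i}{np+q_i}\big(\phi^{np+q_i}(t)\big)'.
\]
Because $[t^{n-1}]\big(\phi^\beta(t)\big)'=n\,[t^n]\phi^\beta(t)$, the factor $1/n$ cancels and I am left with the compact intermediate form $x_n=\sum_{i=1}^m\frac{a_iq_i}{np+q_i}\,[t^n]\phi^{np+q_i}(t)$.

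Finally I would extract $[t^n]\phi^\beta(t)$ combinatorially. Setting $Y(t):=\phi(t)-1=\sum_{j\geq1}y_jt^j$ and expanding by the binomial series, $\phi^\beta(t)=\sum_{k\geq0}\binom{\beta}{k}Y^k(t)$; Lemma~\ref{final} applied to $Y(t)$ gives $Y^k(t)=\sum_{n\geq k}B_{n,k}(y_1,\ldots,y_{n-k+1})t^n$, whence for $n\geq1$
\[
[t^n]\phi^\beta(t)=\sum_{k=1}^n\binom{\beta}{k}B_{n,k}(y_1,\ldots,y_{n-k+1}).
\]
Substituting $\beta=np+q_i$ into the intermediate form and interchanging the two finite summations produces exactly \eqref{generalexpress-id-new}.

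This is a clean application of Lagrange inversion, so I do not anticipate a genuine obstacle—the points needing care are bookkeeping. I must check that $\phi^p\in\mathcal{L}_0$ and that $\sum_i a_i=0$ kills the constant term, which is exactly what makes the Lagrange coefficients coincide with $x_n$. The one technical subtlety is the degenerate case $np+q_i=0$, where the identity $\phi'\phi^{\alpha-1}=\frac1\alpha(\phi^\alpha)'$ breaks down; there one observes that $\frac1{np+q_i}\binom{np+q_i}{k}=\frac1{k!}\prod_{j=1}^{k-1}(np+q_i-j)$ carries no pole, so that for fixed $p$, $a_i$ and $(y_j)_{j\geq1}$ both $x_n$ and the right-hand side of \eqref{generalexpress-id-new} are polynomials in the $q_i$ agreeing wherever $np+q_i\neq0$, and hence agree identically.
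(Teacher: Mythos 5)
Your proof is correct and follows essentially the same route as the paper's: a direct application of the Lagrange inversion formula (Lemma~\ref{lag}) to \eqref{equationknown-gen} with $G(t)=\sum_{i=1}^m a_i\phi^{q_i}(t)$, the rewriting $\phi'(t)\phi^{\alpha-1}(t)=\frac{1}{\alpha}\big(\phi^{\alpha}(t)\big)'$ with $\alpha=np+q_i$, and extraction of $[t^n]\phi^{np+q_i}(t)$ via the binomial series together with Lemma~\ref{final}. The only differences are two points of bookkeeping that the paper leaves implicit---the uniqueness of the expansion in powers of $t/\phi^p(t)$ (which, with $\sum_i a_i=0$, identifies the Lagrange coefficients with $x_n$) and the degenerate case $np+q_i=0$---both of which you handle correctly.
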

\pf  It suffices to apply  the Lagrange inversion formula \eqref{lagformula} to
   \eqref{equationknown-gen}, namely
\begin{align*}
\sum_{n=1}^\infty x_n(t/\phi^p(t))^n=\sum_{k=1}^m a_k\phi^{q_k}(t).
\end{align*}
Thus we compute directly
\begin{align*}
x_n&=\frac{1}{n}[t^{n-1}]\bigg(\phi^{np}(t)\big(\sum_{i=1}^m a_i\phi^{q_i}(t)\big)^{'}\bigg)\\
&=\frac{1}{n}[t^{n-1}]\bigg(\sum_{i=1}^m a_iq_i\phi^{q_i-1}(t)\bigg)\phi^{'}(t)\phi^{np}(t)\\
&=\sum_{i=1}^m \frac{a_iq_i}{n}[t^{n-1}]\phi^{np+q_i-1}(t)\phi^{'}(t)\\
&=\sum_{i=1}^m \frac{a_iq_i}{n(np+q_i)}[t^{n-1}]\big(\phi^{np+q_i}(t)\big)^{'}\\
&=\sum_{i=1}^m \frac{a_iq_i}{np+q_i}[t^{n}]\phi^{np+q_i}(t)\\
&=\sum_{k=1}^n \bigg(\sum_{i=1}^m \frac{a_iq_i}{np+q_i}\binom{np+q_i}{k}\bigg)B_{n,k}(y_1,y_2,\ldots,y_{n-k+1}).
\end{align*}
Hence \eqref{generalexpress-id-new} is proved.
\qed

All remains to express $y_n$ in terms of $(x_n)_{n\geq 1}$.  For that end, we have to establish  a series of preliminaries. The first one is  a general result about the composite inverse for any fps in $\mathcal{L}_1$.
\begin{yl}\label{importantyl-new} Let $g(t)=t/w(t)$ be the composite inverse of $f(t)=t/\phi^p(t)\in \mathcal{L}_1$. Then it holds
\begin{align}
\phi(t/w(t))&=w^{-1/p}(t).\label{importantphi-new}
\end{align}
\end{yl}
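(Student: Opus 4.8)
The plan is to read the claimed identity directly off the defining equation of the composite inverse and then extract a formal $p$-th root. Since $\phi(t)\in\mathcal{L}_0$ with $\phi(0)=1$, the series $\phi^p(t)$ is a well-defined element of $\mathcal{L}_0$ with constant term $1$, so $f(t)=t/\phi^p(t)=t+\cdots\in\mathcal{L}_1$ and by Lemma~\ref{inverxinzhi} its composite inverse $g(t)=t/w(t)$ exists. Because $g(t)=t+\cdots$ starts with $t$, the factor $w(t)=t/g(t)$ lies in $\mathcal{L}_0$ with $w(0)=1$. The crucial observation is that the composition $\phi(g(t))=\phi(t/w(t))$ is legitimate in $\mathbb{C}[[t]]$ precisely because $g(t)$ has zero constant term, and that it lies in $\mathcal{L}_0$ with constant term $\phi(0)=1$.

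First I would write out $f(g(t))=t$ explicitly. Substituting $f(t)=t/\phi^p(t)$ and $g(t)=t/w(t)$ gives
\begin{align*}
\frac{g(t)}{\phi^p(g(t))}=\frac{t/w(t)}{\phi^p(t/w(t))}=t.
\end{align*}
Dividing both sides by the common factor $t$ (legitimate since the left-hand side has order $1$) and clearing the denominator $w(t)$, this collapses to the single identity $\phi^p(t/w(t))=1/w(t)=w^{-1}(t)$.

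It then remains to pass from the $p$-th power back to the base. Both $\phi(t/w(t))$ and $w^{-1/p}(t)$ are elements of $\mathcal{L}_0$ with constant term $1$, and the formal $p$-th root of a series with constant term $1$ is unique among such series (defined via $h^{\alpha}=\exp(\alpha\log h)$, with $(h^{\alpha})^{\beta}=h^{\alpha\beta}$). Raising $\phi^p(t/w(t))=w^{-1}(t)$ to the power $1/p$ and using that $\phi(t/w(t))$ has constant term $1$ therefore yields $\phi(t/w(t))=(w^{-1}(t))^{1/p}=w^{-1/p}(t)$, which is exactly \eqref{importantphi-new}.

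I expect the only delicate point to be the bookkeeping around the formal $p$-th root for a possibly non-integer (complex) $p$: one must verify that $\phi(t/w(t))$ has constant term $1$ so that it coincides with the principal root $(w^{-1}(t))^{1/p}$ rather than some other formal root of $w^{-1}(t)$. Everything else is a direct manipulation of the composite-inverse identity $f(g(t))=t$.
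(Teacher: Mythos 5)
Your proposal is correct and takes essentially the same route as the paper: both arguments read the identity off the composite-inverse relation $f(g(t))=t$ and extract a formal $1/p$-th power, the only difference being that the paper takes the root first, rewriting $f(t)=t/\phi^p(t)$ as $\phi(t)=\left(t/f(t)\right)^{1/p}$ and then substituting $t\to g(t)=t/w(t)$, whereas you substitute first and take the root afterwards. Your added remark on the uniqueness of the formal root with constant term $1$ (needed since $p$ is an arbitrary nonzero complex number, so $\phi^p=\exp(p\log\phi)$) is a careful point that the paper leaves implicit, but it does not change the argument.
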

\pf  At first, from
\begin{align*}
f(t)&=t/\phi^p(t)
\end{align*}
  it follows
\begin{align}
\phi(t)=\bigg(\frac{t}{f(t)}\bigg)^{1/p}.\label{important-inver-new}
\end{align}
Replacing $t$ with $g(t)$ in  \eqref{important-inver-new} and recalling $f(g(t))=t$, we further get
\begin{align*}
\phi(g(t))=\bigg(\frac{g(t)}{t}\bigg)^{1/p}.
\end{align*}
It, after $g(t)=t/w(t)$ inserted, turns out be \eqref{importantphi-new}.
\qed

By virtue of Lemma \ref{importantyl-new}, we are able to express $y_n$ in terms of $(x_n)_{n\geq 1}$.

\begin{yl}\label{dl42-new} Under the assumptions of\, {\bf Research Problem~}\ref{wentinew}. We have
\begin{align}y_n=\sum_{k=1}^n\frac{\lambda_k(-1/p+n)}{1-pn}
B_{n,k}(x_1,x_2,\ldots,x_{n-k+1}),
\end{align}
 where $\lambda_n(s)$ are defined by
\begin{align}
w^{s}(t)=1+\sum_{n=1}^\infty \lambda_n(s) F^n(t)\qquad(s\in\mathbb{C}).\label{phidefne-gen-1-new}
\end{align}
\end{yl}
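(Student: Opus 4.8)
The plan is to extract $y_n=[t^n]\phi(t)$ by a single application of the Lagrange inversion formula, after first rewriting $\phi$ as a composition involving $f(t)=t/\phi^p(t)$. I would begin from Lemma~\ref{importantyl-new}: since $\phi(t/w(t))=w^{-1/p}(t)$, i.e.\ $\phi(g(t))=w^{-1/p}(t)$ with $g=f^{\langle-1\rangle}$, composing both sides with $f$ on the right and using $g(f(t))=t$ gives the clean identity $\phi(t)=w^{-1/p}(f(t))$. This turns the problem of computing the coefficients of $\phi$ into that of extracting the coefficients of $H(f(t))$ with $H(t)=w^{-1/p}(t)$.

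Next I would recast Lemma~\ref{lag} into a coefficient-extraction form adapted to $f$. Taking $\phi\mapsto w\in\mathcal{L}_0$ in \eqref{lagformula}, every fps $G$ expands as $G(t)=\sum_{n\ge0}a_n\,(t/w(t))^n=\sum_{n\ge0}a_n\,g(t)^n$ with $a_n=\frac1n[t^{n-1}]G'(t)w^n(t)$. Substituting $f(t)$ for $t$ and invoking $g(f(t))=t$ collapses the right-hand side to $\sum_n a_n t^n$, so that $[t^n]G(f(t))=\frac1n[t^{n-1}]G'(t)w^n(t)$ for $n\ge1$. Applying this with $G=H=w^{-1/p}$ yields $y_n=\frac1n[t^{n-1}]\bigl(w^{-1/p}\bigr)'(t)\,w^n(t)$.

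The rest is routine simplification. I would compute $\bigl(w^{-1/p}\bigr)'(t)=-\frac1p\,w^{-1/p-1}(t)w'(t)$, merge the powers of $w$, and recognize $w^{n-1/p-1}w'=(n-1/p)^{-1}\bigl(w^{n-1/p}\bigr)'$; then $[t^{n-1}]\bigl(w^{n-1/p}\bigr)'=n\,[t^n]w^{n-1/p}$ reduces the expression to $y_n=\frac{1}{1-pn}[t^n]w^{-1/p+n}(t)$. Finally I would expand $w^{-1/p+n}$ through its defining relation \eqref{phidefne-gen-1-new}, namely $w^s(t)=1+\sum_k\lambda_k(s)F^k(t)$ with $s=-1/p+n$, and read off $[t^n]F^k(t)=B_{n,k}(x_1,\ldots,x_{n-k+1})$ from Lemma~\ref{final} (the terms with $k>n$ drop out since $F^k\in t^k\mathbb{C}[[t]]$). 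This delivers exactly $y_n=\sum_{k=1}^n\frac{\lambda_k(-1/p+n)}{1-pn}B_{n,k}(x_1,\ldots,x_{n-k+1})$.

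I expect the main obstacle to be pinpointing the correct avatar of the Lagrange inversion formula: the key realization is that composing the expansion of Lemma~\ref{lag} (taken with $\phi\mapsto w$) with $f$ reduces the coefficient extraction of $\phi=w^{-1/p}\circ f$ to the single coefficient $\frac1n[t^{n-1}]G'w^n$, and that the \emph{shifted} exponent $-1/p+n$ produced by the formal differentiation is precisely what makes the auxiliary sequence $\lambda_k(s)$ of \eqref{phidefne-gen-1-new} package the whole answer. Once the identity $\phi=w^{-1/p}\circ f$ is secured from Lemma~\ref{importantyl-new}, everything else is bookkeeping with formal derivatives and the coefficient functional.
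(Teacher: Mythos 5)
Your proposal is correct and takes essentially the same route as the paper: the paper likewise combines Lemma \ref{importantyl-new} with the Lagrange inversion formula (Lemma \ref{lag} applied with $\phi\mapsto w$ and $G=w^{-1/p}$) to obtain $y_n=\frac{1}{n}[t^{n-1}]\big(w^{-1/p}(t)\big)'w^n(t)=\frac{1}{1-pn}[t^{n}]w^{-1/p+n}(t)$, and then concludes via \eqref{phidefne-gen-1-new} and Lemma \ref{final}. Your intermediate identity $\phi(t)=w^{-1/p}(f(t))$ only makes explicit the coefficient identification that the paper performs directly on $1+\sum_{n\geq 1}y_n(t/w(t))^n=w^{-1/p}(t)$, so the two arguments coincide up to this expository detail.
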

\pf  Observe that \eqref{importantphi-new}  is  equivalent to
\begin{align}
1+\sum_{n=1}^\infty y_n(t/w(t))^n&=w^{-1/p}(t).\label{important-inver-g333-new}
\end{align}
In this form, by the Lagrange inversion formula \eqref{lagformula}, we obtain
\begin{align*}
y_n&=\frac{-1}{pn}[t^{n-1}]w^{-1/p-1}(t)w^{'}(t)w^{n}(t)\nonumber\\
&=\frac{-1}{pn(-1/p+n)}[t^{n-1}](w^{-1/p+n}(t))^{'}=\frac{1}{1-pn}[t^{n}]w^{-1/p+n}(t)\nonumber\\
&=\sum_{k=1}^n\frac{\lambda_k(-1/p+n)}{1-pn}B_{n,k}(x_1,x_2,\ldots,x_{n-k+1}).
\end{align*}
The last identity is based on the definitions \eqref{bellnewone} and  \eqref{phidefne-gen-1-new}. The lemma is proved.
\qed

Actually, Lemma \ref{generalexpress-new}  together with Lemma  \ref{dl42-new}   gives the complete proof
of Theorem \ref{333-new}, except for a full characterization on $\lambda_n(s)$.  It will be discussed   latter (see \eqref{recresults-two}).

Before proceeding, we had better  illustrate Lemma \ref{dl42-new} by the following case.

\begin{lz} Let $\phi(t)=1-t$ and $p=1$. Then from   \eqref{importantphi-new} it follows
$w(t)=1+t.$
Hence we derive from \eqref{important-inver-new}  a combinatorial identity
\begin{align}
\sum_{k=1}^n\lambda_k(n-1)B_{n,k}(x_1,x_2,\ldots,x_{n-k+1})=0 \quad(n\geq 2),\label{forgot}
\end{align}
where $x_n$ and $\lambda_n(s)$ are given respectively by
\begin{align}
x_n&=(-1)^n\sum_{i=1}^m a_i\binom{n-1+q_i}{n}, \\
(1+t)^{s}&=1+\sum_{n=1}^\infty \lambda_n(s)\bigg(\sum_{k=1}^\infty~x_kt^k\bigg)^n.\label{lambdadef}
\end{align}
Furthermore,  it follows from \eqref{lambdadef}  a more general identity than \eqref{forgot}
\begin{align}
\binom{s}{n}=\sum_{k=1}^n \lambda_k(s)B_{n,k}(x_1,x_2,\ldots,x_{n-k+1}).
\end{align}
\end{lz}
This example shows that it seems  difficult to find any closed-form expression for $\lambda_n(s)$, even for $p=1$ or $2$. Thus, it is necessary to  study possible  recurrence relations of the sequence $(\lambda_n(s))_{n\geq 0}$.
\subsection{Recurrence relations for $\lambda_n(s)$}

The following recurrence relations are based on the definition \eqref{phidefne-gen-1-new}.
\begin{yl}\label{recrelation-one} Let $\lambda_n(s)$ be given by \eqref{phidefne-gen-1-new} with $\lambda_0(s)=1$. Then for any $a,b$, there hold
\begin{align}\lambda_{n}(a+b)&=\sum_{k=0}^n \lambda_k(a)\lambda_{n-k}(b),\label{xxx-1}\\
n\lambda_{n}(a+b)&=\frac{a+b}{a}\sum_{k=0}^n k\lambda_k(a)\lambda_{n-k}(b).\label{recresults-one}
\end{align}
\end{yl}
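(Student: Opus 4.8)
The plan is to carry out all computations in the single formal variable $z=F(t)$. Regarding the defining relation \eqref{phidefne-gen-1-new} as an expansion in powers of $z=F$, I obtain $w^s=\sum_{n\ge0}\lambda_n(s)z^n$ as an element of $\mathbb{C}[[z]]$, where (taking $s=1$ and recalling $\lambda_0(s)=1$) the base $w=w(z)\in\mathcal{L}_0$ is a unit with constant term $w(0)=1$. Hence $w^s=\exp(s\log w)$ is a well-defined formal power series in $z$ for every $s\in\mathbb{C}$, and since $F\in\mathcal{L}_1$ makes the substitution $z=F$ coefficientwise injective, every identity proved in $\mathbb{C}[[z]]$ transfers back to the $t$-side and I may equate coefficients of $z^n$ freely.

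For \eqref{xxx-1} I would simply exploit the multiplicativity $w^{a+b}=w^a\,w^b$. Expanding the two factors on the right as power series in $z$ and forming their Cauchy product gives $w^{a+b}=\sum_{n\ge0}\bigl(\sum_{k=0}^n\lambda_k(a)\lambda_{n-k}(b)\bigr)z^n$; comparing with $w^{a+b}=\sum_n\lambda_n(a+b)z^n$ yields \eqref{xxx-1} immediately.

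For \eqref{recresults-one} I would introduce the derivation $D=z\,\tfrac{d}{dz}$ on $\mathbb{C}[[z]]$, whose sole effect is to multiply the coefficient of $z^n$ by $n$, so that $Dw^s=\sum_n n\lambda_n(s)z^n$. Because $D$ is a derivation and $w$ is a unit, the power rule $D(w^s)=D\exp(s\log w)=\exp(s\log w)\cdot s\,D(\log w)=w^s\cdot s\,\tfrac{Dw}{w}=s\,w^{s-1}Dw$ holds for arbitrary complex $s$. Applying this with exponents $a+b$ and $a$ gives $Dw^{a+b}=(a+b)w^{a+b-1}Dw$ and $w^{a-1}Dw=\tfrac1a Dw^a$; eliminating $w^{a-1}Dw$ produces the clean identity $Dw^{a+b}=\tfrac{a+b}{a}\,w^b\,Dw^a$. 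Extracting the coefficient of $z^n$ on the left gives $n\lambda_n(a+b)$, and on the right, via the Cauchy product of $Dw^a=\sum_k k\lambda_k(a)z^k$ with $w^b=\sum_j\lambda_j(b)z^j$, gives $\tfrac{a+b}{a}\sum_{k=0}^n k\lambda_k(a)\lambda_{n-k}(b)$, which is exactly \eqref{recresults-one}.

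I expect the only genuinely delicate point to be the legitimacy of the formal power rule $D(w^s)=s\,w^{s-1}Dw$ for non-integer complex $s$; this is the step I would justify most carefully, through the representation $w^s=\exp(s\log w)$ together with the fact that $D$ is a derivation on $\mathbb{C}[[z]]$. By contrast, the coefficient comparisons and the passage between the variables $t$ and $z=F$ are routine, so the proof reduces to Cauchy products and the uniqueness of expansions in powers of $F$.
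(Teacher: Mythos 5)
Your proposal is correct and follows essentially the same route as the paper: \eqref{xxx-1} is the Cauchy product coming from $w^{a+b}=w^{a}w^{b}$, and \eqref{recresults-one} results from eliminating $w^{a-1}w'$ between the differentiated expansions, which is precisely the paper's identity $(a+b)w^{a+b-1}w'=\frac{a+b}{a}\,w^{b}\cdot a\,w^{a-1}w'$ followed by coefficient comparison. The only cosmetic differences are that you differentiate in the variable $z=F(t)$ via $D=z\,\frac{d}{dz}$ (justifying the power rule for complex exponents through $w^{s}=\exp(s\log w)$, a point the paper takes for granted), whereas the paper differentiates in $t$ and cancels the common factor $F^{n-1}(t)F'(t)$ before equating coefficients of $F^{n}(t)$ --- the underlying computation is identical.
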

\pf  Evidently, \eqref{xxx-1} is a direct consequence of the basic relation
\[w^{a+b}(t)=w^{a}(t)w^{b}(t).\]
We only need to show \eqref{recresults-one}.
For this end,   recall that
\begin{align}
w^{s}(t)=\sum_{n=0}^\infty \lambda_n(s) F^n(t)\label{id-one}\qquad\mbox{and}\\
sw^{s-1}w^{'}(t)=\sum_{n=0}^\infty n\lambda_n(s) F^{n-1}(t)F^{'}(t).\label{id-two}
\end{align}
Therefore, setting $s=b$ in \eqref{id-one}, we get
\begin{align*}
w^{b}(t)=\sum_{j=0}^\infty \lambda_j(b) F^j(t)
\end{align*}
while putting  $s=a$ and $a+b$ in \eqref{id-two}, respectively,  we easily find
\begin{align*}
aw^{a-1}w^{'}(t)&=\sum_{i=0}^\infty i\lambda_i(a) F^{i-1}(t)F^{'}(t);\\
(a+b)w^{a+b-1}w^{'}(t)&=\sum_{n=0}^\infty n\lambda_n(a+b) F^{n-1}(t)F^{'}(t).
\end{align*}
Upon substituting these expressions into the following basic relation
\begin{align*}
(a+b)w^{a+b-1}w^{'}(t)=\frac{a+b}{a}w^{b}(t)\times aw^{a-1}(t)w^{'}(t),
\end{align*}
we thereby get
\begin{align}
\sum_{n=0}^\infty n\lambda_n(a+b) F^{n}(t)=\frac{a+b}{a}\sum_{j=0}^\infty \lambda_j(b) F^j(t)~ \sum_{i=0}^\infty i\lambda_i(a) F^{i}(t).\label{weigt}
\end{align}
By equating the coefficients of $F^n(t)$ on the both sides of \eqref{weigt}, we finally obtain
\begin{align*}
n\lambda_n(a+b)=\frac{a+b}{a}\sum_{i+j=n} i\lambda_i(a) \lambda_j(b).
\end{align*}
Thus \eqref{recresults-one} is confirmed.
\qed

Now let us return to Research problem \ref{wentinew}. We further establish another recurrence relation by virtue of \eqref{recresults-one}.
\begin{yl}\label{recrelation-two} Let $\lambda_n(s)$ be given by \eqref{phidefne-gen-1-new}, $2m$ parameters $(a_k)_{k=1}^m$ and $(q_k)_{k=1}^m$ satisfy
$$c_0=\sum_{k=1}^m a_k=0, ~~c_1=\sum_{k=1}^m a_kq_k\neq 0.$$
Then
\begin{align}
\lambda_{n}(s)=(n+1)\sum_{k=1}^m\frac{a_kq_k}{q_k-ps}
\lambda_{n+1}(s-q_k/p).\label{recresults-three}
\end{align}
In particular,
\begin{align}(n+1)c_1\lambda_{n+1}(s)=&-ps\lambda_n(s)-\sum_{k=1}^m a_kq_k\sum_{i=1}^{n}\lambda_{n+1-i}(-q_k/p)i\lambda_{i}(s).
\label{recresults-two}
\end{align}
\end{yl}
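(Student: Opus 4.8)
The plan is to collapse the whole problem to a single functional relation between $F$ and $w$ and then to run the Lagrange inversion formula \eqref{lagformula} on it. The first step is to establish the key identity
\[
F(t)=\sum_{k=1}^m a_k\,w^{-q_k/p}(t).
\]
This follows by replacing $t$ with the composite inverse $g(t)=t/w(t)$ of $f(t)=t/\phi^p(t)$ in \eqref{equationknown-gen}: since $f(g(t))=t$, the left-hand side collapses to $F(t)$, while Lemma \ref{importantyl-new} gives $\phi^{q_k}(g(t))=\phi^{q_k}(t/w(t))=w^{-q_k/p}(t)$ on the right. I shall refer to this identity below simply as the \emph{key identity}; note that its constant term is $\sum_k a_k=c_0=0$, consistent with $F\in\mathcal{L}_1$.

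To prove \eqref{recresults-three}, write $\psi(t)=t/F(t)\in\mathcal{L}_0$. Applying the Lagrange inversion formula \eqref{lagformula} to $G=w^s$ recovers the definition \eqref{phidefne-gen-1-new} in the sharp form
\[
\lambda_n(s)=\frac{s}{n}[t^{n-1}]\,w^{s-1}(t)\,w'(t)\,\psi^n(t),
\]
and the same formula applied to $G=w^{s-q_k/p}$ at index $n+1$ expresses each $\lambda_{n+1}(s-q_k/p)$. Using $(s-q_k/p)/(q_k-ps)=-1/p$ and summing over $k$, the right-hand side of \eqref{recresults-three} becomes
\[
-\frac1p[t^n]\,\psi^{n+1}(t)\,w^{s-1}(t)\,w'(t)\sum_{k=1}^m a_kq_k\,w^{-q_k/p}(t).
\]
The crucial simplification is that differentiating the key identity and multiplying by $-pw$ yields the \emph{division-free} relation $w'\sum_k a_kq_kw^{-q_k/p}=-pwF'$, which turns the last display into $[t^n]\,\psi^{n+1}w^sF'$. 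It then remains to verify the purely formal identity $[t^n]\psi^{n+1}w^sF'=\lambda_n(s)$; I would do this by an integration by parts inside $\mathbb{C}[[t]]$, using $\psi^{n+1}F'=\psi^n-t\psi^{n-1}\psi'$, the relation $\psi^{n-1}\psi'=\frac1n(\psi^n)'$, and the rule $[t^{n-1}]C'=n[t^n]C$, which collapses everything to $\frac{s}{n}[t^{n-1}]w^{s-1}w'\psi^n=\lambda_n(s)$.

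Finally, \eqref{recresults-two} should drop out of \eqref{recresults-three} by feeding in the weighted convolution \eqref{recresults-one} of Lemma \ref{recrelation-one}. Taking $a=s$ and $b=-q_k/p$ there gives $(n+1)\lambda_{n+1}(s-q_k/p)=\frac{s-q_k/p}{s}\sum_{i+j=n+1}i\lambda_i(s)\lambda_j(-q_k/p)$; substituting this into \eqref{recresults-three}, using $(s-q_k/p)/(q_k-ps)=-1/p$ once more, and peeling off the single term $i=n+1$ (which contributes $(n+1)\lambda_{n+1}(s)\sum_k a_kq_k=(n+1)c_1\lambda_{n+1}(s)$, since $\lambda_0\equiv 1$) rearranges the result exactly into \eqref{recresults-two}. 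I expect the main obstacle to be the formal-power-series bookkeeping in the proof of \eqref{recresults-three}: keeping the Lagrange coefficient extractions, the shift $s\mapsto s-q_k/p$, and the derivative of the key identity mutually consistent, and especially carrying out the integration-by-parts step entirely within $\mathbb{C}[[t]]$ without ever dividing by $w'$, whose constant term may vanish. Once the key identity and its division-free differentiated form $w'\sum_k a_kq_kw^{-q_k/p}=-pwF'$ are in hand, the remaining manipulations are routine.
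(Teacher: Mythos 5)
Your proof is correct, and its first half takes a genuinely different route from the paper's. Both arguments start identically: your key identity $F(t)=\sum_{k=1}^m a_kw^{-q_k/p}(t)$ is exactly the paper's \eqref{xrmaadded}, obtained the same way from \eqref{equationknown-gen} and Lemma \ref{importantyl-new}. From there the paper differentiates the defining expansion \eqref{phidefne-gen-1-new}, substitutes $F'$ from the differentiated key identity, cancels the factor $w'(t)$, re-expands everything in the basis $(F^n(t))_{n\geq 0}$, and equates coefficients of $F^n$, arriving first at the convolution identity $ps\lambda_n(s)=-\sum_{k=1}^m a_kq_k\sum_{i+j=n}\lambda_i(-q_k/p)(j+1)\lambda_{j+1}(s)$ (its \eqref{5tttt}); then \eqref{recresults-three} follows by evaluating the inner sum with \eqref{recresults-one}, and \eqref{recresults-two} by solving \eqref{5tttt} for $\lambda_{n+1}(s)$. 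You instead prove \eqref{recresults-three} directly by coefficient extraction: the Lagrange formula with $\psi=t/F\in\mathcal{L}_0$ gives $\lambda_n(s)=\frac{s}{n}[t^{n-1}]w^{s-1}(t)w'(t)\psi^n(t)$, and your division-free relation $w'\sum_{k}a_kq_kw^{-q_k/p}=-pwF'$ together with the integration by parts $\psi^{n+1}F'=\psi^n-t\psi^{n-1}\psi'$, $\psi^{n-1}\psi'=\frac{1}{n}(\psi^n)'$, $[t^{n-1}]C'=n[t^n]C$ collapses the right-hand side of \eqref{recresults-three} to $\lambda_n(s)$; I have checked this computation, including the constant $(s-q_k/p)/(q_k-ps)=-1/p$, and it is sound. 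What your route buys: it never divides by $w'(t)$ (the paper's cancellation of $w'$ is legitimate only because $\mathbb{C}[[t]]$ is an integral domain --- $w'$ may well have vanishing constant term --- a point the paper leaves implicit), and it makes \eqref{recresults-three} independent of Lemma \ref{recrelation-one}. What the paper's route buys: the intermediate identity \eqref{5tttt} yields \eqref{recresults-two} immediately, whereas your passage from \eqref{recresults-three} to \eqref{recresults-two} --- substituting \eqref{recresults-one} with $a=s$, $b=-q_k/p$ and peeling off the term $i=n+1$ using $\lambda_0\equiv 1$ --- is the paper's step run in reverse and in effect reconstructs \eqref{5tttt}, so that half is logically the same argument traversed backwards. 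Two implicit points you share with the paper, not gaps specific to you: identifying your Lagrange coefficients with $\lambda_n(s)$ requires, exactly like the paper's equating of coefficients of $F^n$, that $(F^n(t))_{n\geq 0}$ be a basis, i.e., $x_1\neq 0$ so that indeed $\psi\in\mathcal{L}_0$; and your coefficient formula presupposes $n\geq 1$, the case $n=0$ of \eqref{recresults-three} being a direct check from $\lambda_0(s)=1$.
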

\pf Observe first that, since
$
c_0=0$ and
$c_1\neq 0$,  $(F^n(t))_{n=0}^\infty$ forms a base for the ring of formal  power series $\mathbb{C}[[t]]$. That means that $\lambda_n(s)$ given by \eqref{phidefne-gen-1-new} are well-defined and unique.  To show \eqref{recresults-three},  we start with  \eqref{equationknown-gen}, namely
\begin{align*}
F(t/\phi^p(t))=\sum_{k=1}^m a_k\phi^{q_k}(t).
\end{align*}
According to Lemma \ref{importantyl-new}, we obtain
\begin{align}
F(t)=\sum_{k=1}^m a_k\phi^{q_k}(t/w(t))=\sum_{k=1}^m a_kw^{-q_k/p}(t).\label{xrmaadded}
\end{align}
Evidently, differentiating  both sides of \eqref{phidefne-gen-1-new} with respect to $t$  leads us to
\begin{align*}
sw^{s-1}(t)w^{'}(t)&=\sum_{n=1}^\infty n\lambda_n(s)F^{n-1}(t)F^{'}(t)\\
&=\frac{1}{p}\sum_{n=1}^\infty n\lambda_n(s)F^{n-1}(t)\bigg(-\sum_{k=1}^m a_kq_kw^{-q_k/p-1}(t)\bigg)w^{'}(t).
\end{align*}
This, after simplified, is equivalent to
\begin{align}
psw^{s}(t)=\sum_{n=1}^\infty n\lambda_n(s)F^{n-1}(t)\bigg(-\sum_{k=1}^m a_kq_kw^{-q_k/p}(t)\bigg).\label{phidefne-gen-1}
\end{align}
Upon taking \eqref{phidefne-gen-1-new} into account, we have
\begin{align*} ps\sum_{n=0}^\infty \lambda_n(s) F^n(t)&=-\sum_{j=0}^\infty (j+1)\lambda_{j+1}(s)F^{j}(t)\bigg(\sum_{k=1}^m a_kq_k\sum_{i=0}^\infty \lambda_i(-q_k/p) F^i(t)\bigg).
\end{align*} By equating the coefficients of $F^n(t)$ on both sides, we get
\begin{align} ps\lambda_n(s) &=-\sum_{k=1}^m a_kq_k\sum_{\stackrel{i+j=n}{i,j\geq 0}}\lambda_{i}(-q_k/p)(j+1)\lambda_{j+1}(s).\label{5tttt}
\end{align}
At this stage, referring to  \eqref{recresults-one}, we are able to evaluate the inner sum on the right side of \eqref{5tttt}. The result is
\begin{align*} &\sum_{\stackrel{i+j=n}{i,j\geq 0}}\lambda_{i}(-q_k/p)(j+1)\lambda_{j+1}(s)=\sum_{\stackrel{i+j=n+1}{i\geq 0,j\geq 1}}\lambda_{i}(-q_k/p)j\lambda_{j}(s)\\
&\qquad=\sum_{\stackrel{i+j=n+1}{i,j\geq 0}}\lambda_{i}(-q_k/p)j\lambda_{j}(s)=\frac{(n+1)ps}{ps-q_k}
\lambda_{n+1}(s-q_k/p).
\end{align*}
Substituting this result into \eqref{5tttt} gives rise to
\begin{align*}\lambda_{n}(s)=(n+1)\sum_{k=1}^m\frac{a_kq_k}{q_k-ps}\lambda_{n+1}(s-q_k/p).
\end{align*}
Thus \eqref{recresults-three} is proved.
We remark that \eqref{recresults-two} is obtainable by solving \eqref{5tttt} for $\lambda_{n+1}(s)$.
The proof is finished.
\qed

\begin{remark}\label{remark21} It is important to observe from   \eqref{recresults-two}  that $\lambda_n(s)$ is polynomial in $ps$ of degree $n$. This fact is useful for our forthcoming discussion.
\end{remark}
\section{Applications}
In this section, we will turn attention to  the proofs of Theorem  \ref{333-old} and Theorem \ref{333-new-new-000}.  Both are the special cases $m=2$ and $3$ of Theorem \ref{333-new}.
\subsection{A short proof of Theorem \ref{333-old}}
\begin{xinzhi}\label{twocase} Let $\lambda_n(s)$ be given by \eqref{phidefne-gen-1-new} with conditions on the parameters that $m=2$, $a_1=-a_2=1/(r-q)$ and
$q_1=q,q_2=r$. Then
\begin{align}
\lambda_n(s)=\frac{p s}{n!} \prod _{k=1}^{n-1} (ps+kq
   +(n-k)r).\label{recrelation-two-1}
\end{align}
\end{xinzhi}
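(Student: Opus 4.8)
The plan is to compute $\lambda_n(s)$ directly as a coefficient extraction via the Lagrange inversion formula, rather than to struggle with the downward recurrence \eqref{recresults-three} or the convolution recurrence \eqref{recresults-two}. The starting point is the specialization of \eqref{xrmaadded} to $m=2$ with $a_1=-a_2=1/(r-q)$, $q_1=q$, $q_2=r$, namely
\begin{align*}
F(t)=\frac{1}{r-q}\bigl(w^{-q/p}(t)-w^{-r/p}(t)\bigr).
\end{align*}
On the other hand, the defining relation \eqref{phidefne-gen-1-new} says precisely that $\lambda_n(s)=[F^n(t)]\,w^s(t)$, i.e. $\lambda_n(s)$ is the coefficient of $F^n$ when $w^s$ is expanded in powers of the series $F$. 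So everything reduces to expanding $w^s$ in powers of $F$.

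Next I would remove the fractional exponents by the substitutions $v=w^{1/p}$ and $u=v^{r-q}-1$. Since $w(0)=1$ (which follows from \eqref{importantphi-new} at $t=0$), the series $v$ and the fractional powers $(1+u)^{\alpha}$ are well-defined formal power series, $v(0)=1$, and $u$ has zero constant term, so the composition below is legitimate. Writing $c=r-q\ (\neq 0)$ and factoring $v^{-q}-v^{-r}=v^{-r}(v^{c}-1)$, the displayed relation becomes
\begin{align*}
cF=\frac{u}{(1+u)^{r/c}},\qquad w^{s}=v^{ps}=(1+u)^{ps/c}.
\end{align*}
The point of the substitution $u=v^{r-q}-1$ is exactly that it linearizes the numerator $v^{-q}-v^{-r}$ into $u\cdot v^{-r}$, turning the awkward two-term expression into a single quotient $u/\phi(u)$ with $\phi(u)=(1+u)^{r/c}\in\mathcal L_0$.

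With $Z=cF=u/\phi(u)$, I would now apply Lemma \ref{lag} with $G(u)=(1+u)^{ps/c}$ and this $\phi$, obtaining $w^s=\sum_{n\geq 0}a_nZ^n$ with
\begin{align*}
a_n=\frac{1}{n}[u^{n-1}]G'(u)\phi^n(u)=\frac{ps}{cn}[u^{n-1}](1+u)^{ps/c-1+nr/c}=\frac{ps}{cn}\binom{ps/c+nr/c-1}{n-1}.
\end{align*}
Since $Z^n=c^nF^n$, comparing this with $w^s=\sum_n\lambda_n(s)F^n$ yields $\lambda_n(s)=c^na_n$.

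Finally I would simplify. Expanding the binomial coefficient as $\frac{1}{(n-1)!}\prod_{j=1}^{n-1}(ps/c+nr/c-j)$ and pulling a factor $1/c$ out of each of its $n-1$ factors produces $c^{-(n-1)}$, which together with the $c^{-1}$ already present in $a_n$ cancels the rescaling factor $c^n$ exactly. Rewriting each remaining factor via $ps+nr-cj=ps+qj+r(n-j)$ (using $c=r-q$) then gives
\begin{align*}
\lambda_n(s)=\frac{ps}{n!}\prod_{j=1}^{n-1}\bigl(ps+qj+r(n-j)\bigr),
\end{align*}
which is exactly \eqref{recrelation-two-1}. I expect no genuine obstacle in the execution: the only delicate points are checking that the fractional-power substitutions are valid formal-power-series manipulations (guaranteed by $w(0)=1$) and keeping track of the $c^n$ rescaling between $Z$ and $F$. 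The one idea that makes the proof short, and hence the real crux, is the linearizing substitution $u=v^{r-q}-1$, which recasts the $m=2$ relation into the precise shape $u/\phi(u)$ demanded by the Lagrange inversion formula; after that, the computation is routine.
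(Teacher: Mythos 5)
Your proof is correct, and it takes a genuinely different route from the paper's. The paper stays inside its $\lambda_n(s)$-recurrence framework: it specializes \eqref{xrmaadded} and \eqref{phidefne-gen-1} to $m=2$, evaluates the resulting convolution by \eqref{recresults-one} to get the two-term functional recurrence $(ps+qn)\lambda_n(s)=\frac{(n+1)ps}{ps-r}\,\lambda_{n+1}(s-r/p)$, shifts $s\mapsto s+r/p$ to obtain \eqref{hhhhh}, and then proves the product formula by iteration and induction on $n$. You instead compute $\lambda_n(s)$ in one stroke: the substitution $u=w^{(r-q)/p}(t)-1$ linearizes the two-term relation $cF=w^{-q/p}-w^{-r/p}=u(1+u)^{-r/c}$ into exactly the shape $u/\phi(u)$ demanded by Lemma \ref{lag}, and a single coefficient extraction with $G(u)=(1+u)^{ps/c}$ gives $\lambda_n(s)=c^n\cdot\frac{ps}{cn}\binom{(ps+nr)/c-1}{n-1}$; your bookkeeping is exact (the binomial coefficient has $n-1$ factors, each contributing a $1/c$ that together with the explicit $1/c$ cancels $c^n$, and $ps+nr-cj=ps+qj+(n-j)r$), so \eqref{recrelation-two-1} follows. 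Two points are worth making explicit rather than implicit: identifying your Lagrange coefficients with the $\lambda_n(s)$ of \eqref{phidefne-gen-1-new} uses the uniqueness of that expansion, i.e., that $(F^n(t))_{n\geq 0}$ is a basis --- the same fact the paper invokes at the start of the proof of Lemma \ref{recrelation-two}, and here it holds automatically since $c_1=a_1q_1+a_2q_2=-1\neq 0$; and the substitution of the series $u(t)$ (zero constant term, since $w(0)=1$) into the Lagrange expansion is a legitimate formal-power-series composition, as you note. Your argument buys brevity, needs neither Lemma \ref{recrelation-one} nor induction, and explains conceptually why $\lambda_n(s)$ is a shifted binomial coefficient (it is the classical expansion of powers attached to a generalized binomial series, of Hagen--Rothe type). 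What it does not buy is generality: the factorization $v^{-q}-v^{-r}=v^{-r}(v^{c}-1)$ is special to two terms and has no analogue for $m=3$, which is precisely why the paper's recurrence machinery (and, later, the Mina polynomials) cannot be dispensed with in the subsequent sections.
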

\pf Under  these conditions on the parameters,  we can  combine \eqref{xrmaadded} with \eqref{phidefne-gen-1} used in the proof of
 Lemma \ref{recrelation-two}, thereby obtaining
\begin{align}  psw^s(t) &=\frac{r}{r-q}\sum_{n=1}^\infty n\lambda_n(s)F^{n-1}(t)w^{-r/p}(t)\nonumber\\
&-q\sum_{n=1}^\infty n\lambda_n(s) F^{n-1}(t)\bigg(F(t)+\frac{1}{r-q}w^{-r/p}(t)\bigg)\nonumber\\
&=-q\sum_{n=1}^\infty n\lambda_n(s) F^{n}(t)+\sum_{n=1}^\infty n\lambda_n(s) F^{n-1}(t)w^{-r/p}(t).\label{gggggg}
\end{align}
In this from, by  replacing  $w^{s}(t)$ on the left side  and $w^{-r/p}(t)$ on the right side of \eqref{gggggg} with the expressions given by \eqref{phidefne-gen-1-new} and then equating the coefficients of $F^n(t)$, we have
\begin{align*}  (ps+qn)\lambda_n(s) &=\sum_{\stackrel{i+j=n}{i,j\geq 0}}\lambda_{i}(-r/p)(j+1)\lambda_{j+1}(s)\\
&=\frac{(n+1)s}{s-r/p}\lambda_{n+1}(s-r/p).\end{align*}
Note that the last equality comes from \eqref{recresults-one}. Finally, we have
\begin{align*}\lambda_{n+1}(s-r/p)=\frac{(ps+qn)(ps-r)}{ps(n+1)}\lambda_n(s).
\end{align*}
Next, replace $s$ with $s+r/p$. So it becomes
\begin{align}
\lambda_{n+1}(s)=\frac{ps(ps+qn+r)}{(n+1)(ps+r)}\lambda_n(s+r/p).\label{hhhhh}
\end{align}
By iterating \eqref{hhhhh} $n$ times and noting $\lambda_0(s)=1$,  we thereby show by induction on $n$
\begin{align*}
\lambda_{n+1}(s)&=\bigg(\prod_{j=1}^{n+1}\frac{ps+q(n+1-j)+jr}{j}
\bigg)\frac{ps}{(ps+(n+1)r)}\lambda_{0}(s+(n+1)r/p)\\
&=\frac{ps}{(n+1)!}\prod_{j=1}^{n}(ps+(n+1-j)q+jr).
\end{align*}
Thus \eqref{recrelation-two-1} is confirmed.
\qed

\begin{proof}[Proof of Theorem \ref{333-old}]
 Proposition \ref{twocase} together with Lemmas \ref{generalexpress-new} and \ref{dl42-new} subject to $m=2,a_1=-a_2=1/(r-q)$ and $q_1=q,q_2=r$,  gives  the complete proof of Theorem \ref{333-old}.
\end{proof}
\subsection{A proof of Theorem \ref{333-new-new-000} }
Unlike the case $m=2$, in order to show Theorem \ref{333-new-new-000} corresponding to $m=3$, we need a few preliminaries.
\begin{xinzhi}\label{xinzhi3.2} With the same assumption as in Lemma \ref{recrelation-two}. Let $\lambda_n(s)$ be given by \eqref{phidefne-gen-1-new}  with conditions on the parameters that $m=3$, $a_1+a_2+a_3=0$. Then \begin{align}\frac{ps+nq_3}{(n+1)ps}\lambda_{n}(s)&=\frac{a_1(q_3-q_1)}{ps-q_1}
\lambda_{n+1}(s-q_1/p)+\frac{a_2(q_3-q_2)}{ps-q_2}
\lambda_{n+1}(s-q_2/p),\label{two}\\
\frac{n}{(n+1)ps}\lambda_{n}(s)&=\frac{a_1}{ps-q_1}
\lambda_{n+1}(s-q_1/p)\nonumber\\
&+\frac{a_2}{ps-q_2}
\lambda_{n+1}(s-q_2/p)+\frac{a_3}{ps-q_3}
\lambda_{n+1}(s-q_3/p).\label{three}
\end{align}
\end{xinzhi}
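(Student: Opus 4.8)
The plan is to read off both \eqref{two} and \eqref{three} from the single generating-function identity that already underlies the proof of Lemma \ref{recrelation-two}, so that the proposition becomes a short exercise in comparing coefficients. Recall that in that proof, after differentiating \eqref{phidefne-gen-1-new} and invoking the recurrence \eqref{recresults-one}, one establishes the coefficient evaluation
\begin{align*}
\sum_{\stackrel{i+j=n}{i,j\geq 0}}\lambda_{i}(-q_k/p)(j+1)\lambda_{j+1}(s)=\frac{(n+1)ps}{ps-q_k}\lambda_{n+1}(s-q_k/p)
\end{align*}
for each $k$. Writing $\Phi_s(t):=\sum_{m\geq 1}m\lambda_m(s)F^{m-1}(t)$ and recognizing $\sum_{i\geq 0}\lambda_i(-q_k/p)F^i(t)=w^{-q_k/p}(t)$ via \eqref{phidefne-gen-1-new}, this says precisely that the product $w^{-q_k/p}(t)\,\Phi_s(t)$ has $F^n(t)$-coefficient equal to $\frac{(n+1)ps}{ps-q_k}\lambda_{n+1}(s-q_k/p)$. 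Both identities will then follow by multiplying a suitable relation by $\Phi_s(t)$ and comparing coefficients in the basis $(F^n(t))_{n\geq 0}$, which is legitimate because $c_0=0$, $c_1\neq 0$ makes this a basis, as already noted in the proof of Lemma \ref{recrelation-two}.

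First I would derive \eqref{three}. Multiplying the relation \eqref{xrmaadded}, namely $F(t)=\sum_{k=1}^3 a_kw^{-q_k/p}(t)$, by $\Phi_s(t)$ and applying the building block to each summand gives
\begin{align*}
F(t)\,\Phi_s(t)=\sum_{k=1}^3 a_k\sum_{n\geq 0}\frac{(n+1)ps}{ps-q_k}\lambda_{n+1}(s-q_k/p)F^n(t).
\end{align*}
On the other hand $F(t)\,\Phi_s(t)=\sum_{n\geq 1}n\lambda_n(s)F^n(t)$, so equating the coefficient of $F^n(t)$ yields $n\lambda_n(s)=(n+1)ps\sum_{k=1}^3\frac{a_k}{ps-q_k}\lambda_{n+1}(s-q_k/p)$; dividing through by $(n+1)ps$ is exactly \eqref{three}.

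Next I would obtain \eqref{two} by forming a linear combination that annihilates the $k=3$ contribution. Rewriting the main recurrence \eqref{recresults-three} for $m=3$ (noting that $\frac{a_kq_k}{q_k-ps}=-\frac{a_kq_k}{ps-q_k}$) as $\sum_{k=1}^3\frac{a_kq_k}{ps-q_k}\lambda_{n+1}(s-q_k/p)=-\frac{1}{n+1}\lambda_n(s)$, I would compute
\begin{align*}
\sum_{k=1}^3 a_k(q_3-q_k)\frac{\lambda_{n+1}(s-q_k/p)}{ps-q_k}=q_3\sum_{k=1}^3\frac{a_k}{ps-q_k}\lambda_{n+1}(s-q_k/p)-\sum_{k=1}^3\frac{a_kq_k}{ps-q_k}\lambda_{n+1}(s-q_k/p).
\end{align*}
Substituting \eqref{three} into the first sum and the rewritten \eqref{recresults-three} into the second collapses the right-hand side to $\frac{q_3n}{(n+1)ps}\lambda_n(s)+\frac{1}{n+1}\lambda_n(s)=\frac{ps+nq_3}{(n+1)ps}\lambda_n(s)$. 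Since the $k=3$ term on the left carries the factor $q_3-q_3=0$, only the $k=1,2$ terms survive on the left, and the resulting equality is precisely \eqref{two}.

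I expect no genuine obstacle here; the argument is essentially a repackaging of the two relations $F=\sum_k a_kw^{-q_k/p}$ and $psw^{s}=-\Phi_s\sum_k a_kq_kw^{-q_k/p}$ of \eqref{phidefne-gen-1}, the former producing \eqref{three} and its combination with the latter producing \eqref{two}. The only points demanding care are the correct identification of the first factor of the building block as $w^{-q_k/p}(t)$ and the sign bookkeeping when rewriting \eqref{recresults-three}; both are routine once the building-block identity is isolated.
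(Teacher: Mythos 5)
Your proposal is correct, but it organizes the argument in the reverse order from the paper, and the two derivations are genuinely different in one respect. The paper proves \eqref{two} \emph{first}: it differentiates \eqref{phidefne-gen-1-new} to get $psw^{s}(t)=-\Phi_s(t)\sum_{k=1}^3 a_kq_kw^{-q_k/p}(t)$ (in your notation), eliminates the third term via $a_3w^{-q_3/p}(t)=F(t)-a_1w^{-q_1/p}(t)-a_2w^{-q_2/p}(t)$, rearranges, and equates coefficients of $F^n(t)$ using the same convolution evaluation from \eqref{recresults-one} that you isolate as your ``building block''; it then obtains \eqref{three} by subtracting the $m=3$ case of \eqref{recresults-three} (its \eqref{dddd}) from \eqref{two-ma}. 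You instead prove \eqref{three} \emph{directly} by multiplying the undifferentiated relation $F(t)=\sum_{k=1}^3 a_kw^{-q_k/p}(t)$ by $\Phi_s(t)$ and comparing coefficients — an identity the paper never writes down — and then recover \eqref{two} as the linear combination $q_3\times\eqref{three}$ plus the rewritten \eqref{recresults-three}. Since the three identities \eqref{two}, \eqref{three}, and \eqref{recresults-three} satisfy exactly one linear relation, deriving any two yields the third, so both routes are logically complete; both rest on the same two pillars (the basis property of $(F^n(t))_{n\geq 0}$ under $c_0=0$, $c_1\neq 0$, and the evaluation $\sum_{i+j=n+1}\lambda_i(-q_k/p)\,j\lambda_j(s)=\frac{(n+1)ps}{ps-q_k}\lambda_{n+1}(s-q_k/p)$). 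Your route has two small advantages: it avoids the substitution-and-rearrangement computation needed to isolate the $q_3$ terms, and — more substantively — the paper's passage from \eqref{two-ma} minus \eqref{dddd} to \eqref{three} produces $\frac{nq_3}{(n+1)ps}\lambda_n(s)=q_3\sum_{k=1}^3\frac{a_k}{ps-q_k}\lambda_{n+1}(s-q_k/p)$ and implicitly divides by $q_3$, whereas your direct proof of \eqref{three} is valid even when $q_3=0$ (the Research Problem's hypotheses do not exclude this). The paper's order, conversely, makes \eqref{two} self-contained before \eqref{recresults-three} is invoked. All your intermediate computations (the identification of $w^{-q_k/p}(t)$ as the first factor, the coefficient of $F^n(t)$ in $F(t)\Phi_s(t)$, and the sign bookkeeping in rewriting \eqref{recresults-three}) check out.
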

\pf  Under the assumption  \eqref{equationknown-gen}, it holds
\begin{align}
F(t/\phi^p(t))=\sum_{k=1}^3 a_k\phi^{q_k}(t).\label{wangadd}
\end{align}
A direct application of Lemma \ref{importantyl-new} reduces \eqref{wangadd} to
\begin{align}
F(t)=\sum_{k=1}^3 a_k\phi^{q_k}(t/w(t))=\sum_{k=1}^3 a_kw^{-q_k/p}(t).\label{phidefne-gen-1-newnew}
\end{align}
Now,  differentiating both sides of \eqref{phidefne-gen-1-newnew} with respect to $t$, we  obtain
\begin{align*}
sw^{s-1}(t)w^{'}(t)&=\sum_{n=1}^\infty n\lambda_n(s)F^{n-1}(t)F^{'}(t)\\
&=\frac{1}{p}\sum_{n=1}^\infty n\lambda_n(s)F^{n-1}(t)\bigg(-\sum_{k=1}^3 a_kq_kw^{-q_k/p-1}(t)\bigg)w^{'}(t).
\end{align*}
After a bit simplification, it turns out to be
\begin{align}
psw^{s}(t)=\sum_{n=1}^\infty n\lambda_n(s)F^{n-1}(t)\bigg(-\sum_{k=1}^3 a_kq_kw^{-q_k/p}(t)\bigg).\label{wangjin1}
\end{align}
Observe that \eqref{phidefne-gen-1-newnew} implies
\begin{align*}
a_3w^{-q_3/p}(t)=F(t)-a_1w^{-q_1/p}(t)-a_2w^{-q_2/p}(t).
\end{align*}
On substituting this relation into the right side of \eqref{wangjin1},  we obtain
\begin{align*}-psw^{s}(t)&=\sum_{n=1}^\infty n\lambda_n(s)F^{n-1}(t)\bigg(a_1q_1w^{-q_1/p}(t)+a_2q_2w^{-q_2/p}(t)\\
&+q_3\big(F(t)-a_1w^{-q_1/p}(t)-a_2w^{-q_2/p}(t)\big)\bigg).
\end{align*}
By rearrangement of the series on the right side, we arrive  at
\begin{align*}-psw^{s}(t)
&=q_3\sum_{n=1}^\infty n\lambda_n(s)F^{n}(t)\\
&+a_1(q_1-q_3)w^{-q_1/p}(t)\sum_{n=1}^\infty n\lambda_n(s)F^{n-1}(t)\\
&+a_2(q_2-q_3)w^{-q_2/p}(t)\sum_{n=1}^\infty n\lambda_n(s)F^{n-1}(t).
\end{align*}
At this stage, by  replacing each $w^{x}(t)~ (x=s,-q_1/p,-q_2/p)$ on both sides with the expression given by \eqref{phidefne-gen-1-new} and then equating the coefficients of $F^n(t)$, we get
\begin{align}(nq_3+ps)\lambda_n(s)
&=a_1(q_3-q_1)\sum_{i+j=n}(j+1)\lambda_{j+1}(s)\lambda_i(-q_1/p)\nonumber\\
&+a_2(q_3-q_2)\sum_{i+j=n}(j+1)\lambda_{j+1}(s)\lambda_i(-q_2/p).\label{weigt-2}
\end{align}
Recall that  \eqref{recresults-one} asserts
\begin{align*}\sum_{\stackrel{i+j=n}{i,j\geq 0}}(j+1)\lambda_{j+1}(s)\lambda_{i}(-q_k/p)=\frac{(n+1)ps}{ps-q_k}
\lambda_{n+1}(s-q_k/p).
\end{align*}
Using this identity, we are able to evaluate the sums on the right side of \eqref{weigt-2}. The result is
\begin{align}\frac{ps+nq_3}{(n+1)ps}\lambda_{n}(s)=\frac{a_1(q_3-q_1)}{ps-q_1}
\lambda_{n+1}(s-q_1/p)+\frac{a_2(q_3-q_2)}{ps-q_2}
\lambda_{n+1}(s-q_2/p).\label{two-ma}
\end{align}
So \eqref{two} is proved. Next we proceed to prove  \eqref{three}. For this, by referring to \eqref{recresults-three}, we have
\begin{align}\frac{1}{n+1}\lambda_{n}(s)&=\frac{a_1q_1}{q_1-ps}
\lambda_{n+1}(s-q_1/p)\label{dddd}\\
&+\frac{a_2q_2}{q_2-ps}
\lambda_{n+1}(s-q_2/p)+\frac{a_3q_3}{q_3-ps}
\lambda_{n+1}(s-q_3/p).\nonumber
\end{align}
Upon subtracting \eqref{dddd} from \eqref{two-ma}, we  immediately get
\begin{align*}\frac{n}{(n+1)ps}\lambda_{n}(s)&=\frac{a_1}{ps-q_1}
\lambda_{n+1}(s-q_1/p)\\&+\frac{a_2}{ps-q_2}
\lambda_{n+1}(s-q_2/p)+\frac{a_3}{ps-q_3}
\lambda_{n+1}(s-q_3/p).
\end{align*}
Thus, \eqref{three} is proved.
\qed

Next we illustrate the application of  Proposition \ref{xinzhi3.2} as a case study.
\begin{lz} Let  $\lambda_n(s)$ be given by \eqref{phidefne-gen-1-new} and $m=3$, $q_1=q_2$. Then
\begin{align}\lambda_{n}(s)=\frac{ps}{n!(a_3(q_1-q_3))^{n}}
\prod^{n-1}_{k=1}(ps+kq_1+(n-k)q_3).\label{hhhh}
\end{align}
\end{lz}
\pf Observe that the case $q_1=q_2$ reduces \eqref{two} to
\begin{align}\frac{ps+nq_3}{(n+1)ps}\lambda_{n}(s)=\frac{a_3(q_3-q_1)}{q_1-ps}
\lambda_{n+1}(s-q_1/p).\label{three-three}
\end{align}
From \eqref{three-three} it follows
\begin{align*}\lambda_{n+1}(s-q_1/p)
=\frac{ps+nq_3}{(n+1)ps}\frac{q_1-ps}{a_3(q_3-q_1)}\lambda_{n}(s).
\end{align*}
Replace $s$ with $s+q_1/p$. We obtain the
recurrence relation as
\begin{align}
\lambda_{n+1}(s)=\frac{ps+q_1+nq_3}{(n+1)(ps+q_1)}\frac{ps}{a_3(q_1-q_3)}
\lambda_{n}(s+q_1/p).\label{three-three-000}
\end{align}
Iterating \eqref{three-three-000} in succession and then showing by induction,  we arrive at
\begin{align*}
\lambda_{n+1}(s)=\frac{ps}{(n+1)!(a_3(q_1-q_3))^{n+1}}\prod^{n}_{j=1}(ps+jq_1+(n+1-j)q_3).
\end{align*}
The proof is finished.
\qed

Further, in order to  efficiently compute $\lambda_n(s)$, we need to  reformulate the recurrence relation \eqref{recresults-two} as follows.
\begin{xinzhi}\label{prop3.4} Let  $\lambda_n(s)$ be given by \eqref{phidefne-gen-1-new} with $m=3$ and  write
\begin{align}\lambda_ {n}(s)=\frac{ps}{n!(-c_1)^n}f_n(s).\label{kkkk}\end{align}Then $f_0(s)=1/(ps), f_1(s)=1$, and
\begin{align}f_{n+1}(s)=ps f_n(s)+\frac{1}{c_1}\sum_{k=1}^3 a_kq_k^2\sum_{i=1}^{n}\binom{n}{i}f_{i}(-q_k/p)f_{n+1-i}(s).
\label{deflambda}
\end{align}
\end{xinzhi}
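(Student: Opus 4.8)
The plan is to derive Proposition \ref{prop3.4} as a purely formal consequence of the recurrence \eqref{recresults-two} (specialized to $m=3$), transported through the substitution \eqref{kkkk}; no new analytic input is needed. I would first settle the two initial values. Since $\lambda_0(s)=1$, the ansatz $\lambda_0(s)=\frac{ps}{0!(-c_1)^0}f_0(s)=ps\,f_0(s)$ forces $f_0(s)=1/(ps)$. Taking $n=0$ in \eqref{recresults-two}, the inner sum is empty and we obtain $c_1\lambda_1(s)=-ps$, hence $\lambda_1(s)=-ps/c_1$; comparing with $\lambda_1(s)=\frac{ps}{1!(-c_1)}f_1(s)$ yields $f_1(s)=1$.

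For the recurrence itself, I would substitute \eqref{kkkk} into each occurrence of $\lambda$ in \eqref{recresults-two}. The point worth flagging at the outset is the evaluation at the shifted argument: because the prefactor in \eqref{kkkk} is $ps$, one has
\begin{align*}
\lambda_i(-q_k/p)=\frac{p(-q_k/p)}{i!(-c_1)^i}f_i(-q_k/p)=\frac{-q_k}{i!(-c_1)^i}f_i(-q_k/p),
\end{align*}
so each summand $-a_kq_k\,\lambda_{n+1-i}(-q_k/p)\,(\cdots)$ picks up an extra factor $-q_k$; together with the leading minus sign this converts the coefficient $a_kq_k$ into $+a_kq_k^2$, which is precisely the source of both the squared $q_k$ and the positive sign appearing in \eqref{deflambda}. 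Inserting also $\lambda_i(s)=\frac{ps}{i!(-c_1)^i}f_i(s)$, using the elementary simplification $i/i!=1/(i-1)!$, and then clearing denominators by multiplying the whole identity through by $\frac{n!(-c_1)^{n+1}}{ps}$, a short check shows the left side collapses to $c_1 f_{n+1}(s)$, the term $-ps\lambda_n(s)$ to $ps\,c_1 f_n(s)$, and the double sum to
\begin{align*}
\sum_{k=1}^3 a_kq_k^2\sum_{i=1}^n\frac{n!}{(n+1-i)!\,(i-1)!}\,f_{n+1-i}(-q_k/p)\,f_i(s).
\end{align*}

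The one step that genuinely requires attention---though it remains elementary---is recognizing that this last sum coincides with the target double sum in \eqref{deflambda} after reindexing. As written, the arguments look \emph{swapped}: \eqref{deflambda} carries $\binom{n}{i}f_i(-q_k/p)f_{n+1-i}(s)$, whereas the computation naturally produces $f_{n+1-i}(-q_k/p)f_i(s)$. Setting $j=n+1-i$ resolves this, since the falling-factorial coefficient becomes $\frac{n!}{j!\,(n-j)!}=\binom{n}{j}$, the two arguments align correctly, and the range $1\le i\le n$ maps onto $1\le j\le n$. Dividing the resulting identity by $c_1$ then reproduces \eqref{deflambda} verbatim. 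I expect the whole argument to be routine once this reindexing is spotted; the principal risk is purely clerical, namely tracking the powers of $-c_1$ and the factorials correctly through the substitution.
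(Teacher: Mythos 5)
Your proposal is correct and takes essentially the same route as the paper: the paper's own proof simply specializes \eqref{recresults-two} to $m=3$, substitutes \eqref{kkkk}, and ``simplifies,'' which is precisely the computation you carry out in full (your sign bookkeeping via the prefactor $ps$ at the shifted argument, the factor $i/i!=1/(i-1)!$, and the reindexing $j=n+1-i$ are all verified correct, as are the initial values $f_0(s)=1/(ps)$ and $f_1(s)=1$).
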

\pf Evidently, the case $m=3$ specializes  \eqref{recresults-two} to
\begin{align*}(n+1)c_1\lambda_{n+1}(s)=-ps\lambda_n(s)-\sum_{k=1}^3a_kq_k\sum_{i=1}^{n}\lambda_{i}(-q_k/p)(n+1-i)\lambda_{n+1-i}(s).
\end{align*}
Taking \eqref{kkkk} into account and simplifying the last identity, we obtain \begin{align*}f_{n+1}(s)=ps f_n(s)+\frac{1}{c_1}\sum_{k=1}^3 a_kq_k^2\sum_{i=1}^{n}\binom{n}{i}f_{i}(-q_k/p)f_{n+1-i}(s).
\end{align*}
We have the required result.
\qed

\begin{lz}\label{lzold} Let $c_n$ be given by \eqref{cccoefficient} and  $f_n(s)$ be given by \eqref{deflambda}. Then
\begin{align}f_2(s)&=ps+\frac{c_2}{c_1},\nonumber\\
f_3(s)
&=(ps)^2+3\frac{c_2}{c_1}ps+\frac{3c_2^2-c_1c_3}{c_1^2},\nonumber\\
f_{4}(s)&=(ps)^3+6\frac{c_2}{c_1}(ps)^2+\frac{15c_2^2-4c_1c_3}{c_1^2}ps+
\frac{15c_2^3-10c_1c_2c_3+c_1^2c_4}{c_1^3}.\label{fffff}
\end{align}
\end{lz}
\pf At first, by virtue of the definition \eqref{kkkk}, it is easy to check
\begin{align*}
f_2(s)=ps+\frac{c_2}{c_1}.
\end{align*}
Based on this, it is not hard  to find  by iteration
\begin{align*}
f_3(s)&=ps f_2(s)+\frac{1}{c_1}\sum_{k=1}^3 a_kq_k^2(2f_{1}(-q_k/p)f_{2}(s)+f_{2}(-q_k/p)f_{1}(s))\\
&=ps \frac{ps c_1+c_2}{c_1}+\frac{1}{c_1}\sum_{k=1}^3 a_kq_k^2\bigg(2\frac{ps c_1+c_2}{c_1}+\frac{-q_k c_1+c_2}{c_1}\bigg)\\
&=(ps)^2+3\frac{c_2}{c_1}ps+3\frac{c_2^2}{c_1^2}-\frac{c_3}{c_1}.
\end{align*}
Given $f_2(s)$ and $f_3(s)$, it is easy to  derive from \eqref{deflambda} that
\begin{align*}
f_{4}(s)&=(ps)^3+3\frac{c_2}{c_1}(ps)^2+3\frac{c_2^2}{c_1^2}ps-\frac{c_3}{c_1}ps\\
&+3\frac{c_2}{c_1}f_{3}(s)+\frac{3}{c_1}\sum_{k=1}^3 a_kq_k^2f_{2}(-q_k/p)f_{2}(s)+\frac{1}{c_1}\sum_{k=1}^3 a_kq_k^2f_{3}(-q_k/p).\end{align*}
A direct substitution of $f_2(s)$ and $f_3(s)$ gives rise to
\begin{align*}
f_{4}(s)&=(ps)^3+3\frac{c_2}{c_1}(ps)^2+3\frac{c_2^2}{c_1^2}ps-\frac{c_3}{c_1}ps+3\frac{c_2}{c_1}(ps)^2+9\frac{c_2^2}{c_1^2} ps+9\frac{c_2^3}{c_1^3}-3\frac{c_2c_3}{c_1^2}\\
&+\frac{3}{c_1}\sum_{k=1}^3 a_kq_k^2\bigg(-q_kps+\frac{c_2}{c_1}ps-\frac{c_2}{c_1}q_k+\frac{c_2^2}{c_1^2}\bigg)\\
&+\frac{1}{c_1}\sum_{k=1}^3 a_kq_k^2\bigg(q_k^2-3\frac{c_2}{c_1}q_k+3\frac{c_2^2}{c_1^2}-\frac{c_3}{c_1}\bigg).
\end{align*}
Finally, we obtain
\begin{align*}
f_{4}(s)&=(ps)^3+6\frac{c_2}{c_1}(ps)^2+\bigg(15\frac{c_2^2}{c_1^2}-4\frac{c_3}{c_1}\bigg)ps+15\frac{c_2^3}{c_1^3}-10\frac{c_2c_3}{c_1^2}+\frac{c_4}{c_1}.
\end{align*}
\qed

These computational results about $f_n(s)~(1\leq n\leq 4)$ inspire us to introduce
\begin{dy}[The Mina polynomials]\label{mina} For integers $n\geq 1, m,k,r\geq 0$ with  $n-1\geq k$, we define the $n$ by $n$ (blocked) matrices $A_{n,r}$ by
\begin{align}A_{n,r}:=\begin{pmatrix}
\mathbf{E}_{r}&\mathbf{0}\\\mathbf{0}&M_{n-r}\end{pmatrix},\label{Adef-new}
\end{align}
where $\mathbf{E}_n$  denotes the $n\times n$ identity matrix and  $M_n$ is the $n\times n$ matrix  subject to
\begin{align}
[M_n]_{m+1,k+1}:=\binom{k}{m}(-1)^{k-m}x_{k-m+1}.
\label{Mdef-new}
\end{align}
Then we define the Mina polynomial $C_{n,k}(x_1,x_2,\ldots,x_{n-k})$ to be the entry
\begin{align}x_1^{2n-2-k}[A_{n,0}^{-1}A_{n,1}^{-1}\cdots A_{n,n-2}^{-1}]_{k+1,n}\label{xxxxx-new}
\end{align}
 for $n\geq 2$ and  $C_{1,0}(x_1)=1$ for $n=1,k=0$.
\end{dy}

Hereafter, the notation $A^{-1}$ stands for the usual inverse of the matrix $A=(a_{ij})$ and $[A]_{i,j}$ for its entry $a_{ij}$.

\begin{lz} Let  $C_{n,k}(x_1,x_2,\ldots,x_{n-k})$ be given by \eqref{xxxxx-new}. Then
\begin{align*}
& C_{1,0}(x_1)=1 \\
& C_{2,1}(x_1)=1,C_{2,0}(x_1,x_2)=x_2\\
& C_{3,2}(x_1)=1, C_{3,1}(x_1,x_2)=3x_2, C_{3,0}(x_1,x_2,x_3)=3x_2^2-x_1x_3 \\
& C_{4,3}(x_1)=1, C_{4,2}(x_1,x_2)=6x_2^2, C_{4,1}(x_1,x_2,x_3)=15x_2^2-4x_1x_3,\\
&\qquad\qquad  C_{4,0}(x_1,x_2,x_3,x_4)=15x_2^3-10x_1x_2x_3+x_1^2x_4.
          \end{align*}
\end{lz}

With the help of the Mina polynomials, we can give a characterization on $f_n(s)$.
\begin{xinzhi}\label{recrelation-twotwo} Let $\lambda_n(s)$ and $f_n(s)$ be given by \eqref{phidefne-gen-1-new}  and \eqref{kkkk}, respectively.   Then we have
\begin{align}f_{n}(s)=\sum_{k=0}^{n-1}\frac{
C_{n,k}(c_1,c_2,\ldots,c_{n-k})}{c_1^{n-1-k}}(ps)^k,
\label{two-great}
\end{align}
where $c_k$ is the same as in \eqref{cccoefficient}.
\end{xinzhi}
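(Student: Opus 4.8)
The plan is to reduce \eqref{two-great} to a clean closed form for the coefficients of $f_n(s)$ in powers of $ps$, and then to identify those coefficients with the matrix-defined Mina polynomials. Setting $P_n:=A_{n,0}^{-1}A_{n,1}^{-1}\cdots A_{n,n-2}^{-1}$, so that $C_{n,k}(x_1,\dots,x_{n-k})=x_1^{2n-2-k}[P_n]_{k+1,n}$, the asserted identity is equivalent, after the substitution $x_j\to c_j$, to $[(ps)^k]\,f_n=c_1^{\,n-1}[P_n]_{k+1,n}\big|_{x\to c}$, because $c_1^{\,2n-2-k}/c_1^{\,n-1-k}=c_1^{\,n-1}$. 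In particular the two sides already agree for $n=1,2$ against \eqref{fffff}. I would prove the coefficient formula and the matrix identification separately and then combine them.

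For the coefficient formula I would use the Lagrange inversion formula, Lemma \ref{lag}. Writing $\sigma=\log w(t)$, so that $w^s=e^{s\sigma}$, relation \eqref{xrmaadded} becomes $F=\sum_{i=1}^{3}a_i e^{-q_i\sigma/p}=:\widehat F(\sigma)$, and the normalization $c_0=0$, $c_1\neq0$ guarantees $\widehat F\in\mathcal L_1$ as a series in $\sigma$ with $\widehat F{\,}'(0)=-c_1/p\neq0$. Since by \eqref{phidefne-gen-1-new} one has $\lambda_n(s)=[F^n]\,w^s$, applying Lemma \ref{lag} in the variable $\sigma$ (with $F=\sigma/\phi(\sigma)$, $\phi=\sigma/\widehat F\in\mathcal L_0$) gives $\lambda_n(s)=\tfrac{s}{n}[\sigma^{n-1}]\,e^{s\sigma}\big(\sigma/\widehat F(\sigma)\big)^n$. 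Feeding this into \eqref{kkkk}, expanding $e^{s\sigma}=\sum_k\tfrac{s^k}{k!}\sigma^k$ and extracting the coefficient of $(ps)^k$, and finally rescaling $\sigma\mapsto p\rho$ (which clears every power of $p$ since $\widehat F(p\rho)=\sum_i a_i e^{-q_i\rho}=:E(\rho)$ is $p$-free), I obtain the explicit expression $[(ps)^k]f_n=\tfrac{(n-1)!(-c_1)^n}{k!}\,[\rho^{\,n-1-k}]\big(\rho/E(\rho)\big)^n$, where $E(\rho)=\sum_{j\ge1}\tfrac{(-\rho)^j}{j!}c_j$. One checks directly that this reproduces the entries of \eqref{fffff}.

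It then remains to prove the purely algebraic identity $C_{n,k}(x_1,\dots,x_{n-k})=\tfrac{(n-1)!(-x_1)^{n}x_1^{\,n-1-k}}{k!}\,[\rho^{\,n-1-k}]\big(\rho/X(\rho)\big)^n$, with $X(\rho)=\sum_{j\ge1}\tfrac{(-\rho)^j}{j!}x_j$, between the matrix-defined Mina polynomial and this Lagrange coefficient. Here I would exploit the recursive structure of $P_n$: because $A_{n+1,r}$ fixes the first coordinate and acts on the remaining $n$ coordinates exactly as $A_{n,r-1}$ for $r\ge1$, the product telescopes to $P_{n+1}=M_{n+1}^{-1}\,\mathrm{diag}(1,P_n)$, which reduces the column-$(n+1)$ entries of $P_{n+1}$ to those of $P_n$ through a single multiplication by $M_{n+1}^{-1}$. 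To make $M_n^{-1}$ explicit I would conjugate $M_n$ by $\mathrm{diag}(0!,1!,\dots,(n-1)!)$, turning it into an upper-triangular Toeplitz matrix with entries $\tfrac{(-1)^{k-m}}{(k-m)!}x_{k-m+1}$, i.e. with symbol $-X'$; consequently $M_n^{-1}$ is governed by the reciprocal series $-1/X'$, and the telescoped product should evaluate, up to the factorial weights, the $n$-th power $(\rho/X)^n$.

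The hard part will be precisely this last reconciliation: turning the telescoping matrix recursion into the statement that $P_n$ computes the $n$-th power of the series inverse $\rho/X(\rho)$, which is in effect Lagrange inversion in matrix form. The delicate bookkeeping is the tracking of the powers of $x_1$ hidden in the normalizing factor $x_1^{2n-2-k}$ against the $c_1^{\,n-1-k}$ in \eqref{two-great}, together with the matching of the factorial weights $m!/k!$ produced by the conjugation against the factor $1/k!$ in the Lagrange coefficient. Once this determinantal-to-Lagrange identity is established, combining it with the coefficient formula of the second paragraph finishes the proof, and the explicit values of $f_2,f_3,f_4$ in Example \ref{lzold} provide a useful consistency check throughout.
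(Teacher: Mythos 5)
Your opening computation is sound and in fact proves more than you need: passing to $\sigma=\log w(t)$ (legitimate, since $c_0=0$, $c_1\neq 0$ make $\widehat F\in\mathcal L_1$ in $\sigma$, and uniqueness of the expansion \eqref{phidefne-gen-1-new} transports the result back), Lemma \ref{lag} does give $\lambda_n(s)=\tfrac{s}{n}[\sigma^{n-1}]e^{s\sigma}\bigl(\sigma/\widehat F(\sigma)\bigr)^n$, and your coefficient formula $[(ps)^k]f_n=\tfrac{(n-1)!(-c_1)^n}{k!}[\rho^{n-1-k}]\bigl(\rho/E(\rho)\bigr)^n$ with $E(\rho)=\sum_{j\geq 1}\tfrac{(-\rho)^j}{j!}c_j$ checks out against Example \ref{lzold}. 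But the proposal has a genuine gap exactly where you flag ``the hard part'': the identity $C_{n,k}(x_1,\ldots,x_{n-k})=\tfrac{(n-1)!(-x_1)^n x_1^{n-1-k}}{k!}[\rho^{n-1-k}]\bigl(\rho/X(\rho)\bigr)^n$ is never established, and since $C_{n,k}$ is \emph{defined} in Definition \ref{mina} only through the matrix product $P_n=A_{n,0}^{-1}A_{n,1}^{-1}\cdots A_{n,n-2}^{-1}$, this identification carries the entire content of the proposition. The symbol-calculus sketch does not deliver it: conjugating $M_n$ by $\mathrm{diag}(0!,1!,\ldots,(n-1)!)$ does yield a Toeplitz matrix with symbol $-X'(\rho)$, but your (correct) telescoping $P_{n+1}=M_{n+1}^{-1}\,\mathrm{diag}(1,P_n)$ is not a product of Toeplitz matrices --- the $\mathrm{diag}(1,\cdot)$ padding destroys the Toeplitz structure at every stage --- so ``$P_n$ computes $(\rho/X)^n$'' does not follow from multiplicativity of symbols and would require its own inductive proof, which is, as you concede, Lagrange inversion in matrix form all over again. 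As written, you have a correct evaluation of $f_n$ plus an unproven assertion that the matrix definition matches it.

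The gap is closable, and the paper's own route shows that no closed form is needed at all, which inverts your logical order. Write $f_n(s)=\sum_{k=0}^{n-1}\chi_n(k)(ps)^k$ (the degree bound is Remark \ref{remark21}); substituting \eqref{kkkk} into the already-proved recurrence \eqref{recresults-three} and equating coefficients of $(ps)^m$ gives the triangular system $M_{n+1}\chi_{n+1}=c_1\,(0,\chi_n^{T})^{T}$ of \eqref{coeffi-rec-2}, uniquely solvable since the diagonal entries equal $c_1\neq 0$; inverting and iterating --- your telescoping identity run forwards --- produces exactly the iterated product in \eqref{xxxxx-new}, whence $\chi_{n+1}(k)=c_1^{n}[P_{n+1}]_{k+1,n+1}=C_{n+1,k}(c_1,\ldots,c_{n+1-k})/c_1^{n-k}$ holds \emph{by definition}, with the $x_1$-power bookkeeping you worry about coming out automatically. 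Alternatively, you could keep your Lagrange expression and close the gap by verifying that its coefficients satisfy this same triangular recursion (equivalently, that they satisfy \eqref{recresults-three}) and invoking uniqueness. Either way, the missing step is a recursion-plus-uniqueness argument, not more symbol calculus; and once the proposition is proved by that route, your formula $C_{n,k}(c_1,\ldots,c_{n-k})=\tfrac{(n-1)!(-1)^n c_1^{2n-1-k}}{k!}[\rho^{n-1-k}]\bigl(\rho/E(\rho)\bigr)^n$ becomes a worthwhile corollary --- a closed-form evaluation of the Mina polynomials that the paper itself does not state.
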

\pf As indicated in Remark \ref{remark21} and Proposition \ref{prop3.4}, it is now reasonable to assume that for $n\geq 1$
\begin{align}f_{n}(s)=\sum_{k=0}^{n-1}\chi_n(k)(ps)^k.
\label{two-great-newnew}
\end{align}
It only remains to find $\chi_n(k)$. For this,  substitute $\lambda_n(s)$ given by \eqref{kkkk} into \eqref{recresults-three} and make a bit simplification. We have
\begin{align*}
a_1q_1\sum_{k=0}^n\chi_{n+1}(k)(ps-q_1)^k&+a_2q_2\sum_{k=0}^n\chi_{n+1}(k)(ps-q_2)^k\\
&+a_3q_3\sum_{k=0}^n\chi_{n+1}(k)(ps-q_3)^k=c_1ps\sum_{k=0}^{n-1}\chi_n(k)(ps)^k.
\end{align*}
That is
\begin{align*}
\sum_{k=0}^n\chi_{n+1}(k)\sum_{i=0}^k\binom{k}{i}(ps)^i(-1)^{k-i}c_{k-i+1}=c_1\sum_{k=0}^{n}\chi_n(k-1)(ps)^k.
\end{align*}
By equating the coefficients of $(ps)^m$ for $0\leq m\leq n$, we obtain
\begin{align}
\sum_{k=0}^n\chi_{n+1}(k)\binom{k}{m}(-1)^{k-m}c_{k-m+1}=c_1\chi_n(m-1).\label{coeffi-rec-1}
\end{align}
It leads us to a system of linear equations in $n+1$ unknowns
\begin{align}
M_{n+1}\chi_{n+1}=c_1\begin{pmatrix}0\\\chi_n\end{pmatrix}.
\label{coeffi-rec-2}
\end{align}
Here, we set $\chi(k)=0$ for $k<0$ and $M_ {n+1}$ are given by \eqref{Mdef-new}. Moreover, we define the column vector
\begin{align*}
\chi_n:=(\chi_{n}(0),\chi_{n}(1),\ldots,\chi_{n}(n-1))^{T}.\end{align*}
The superscript $T$ indicates the transpose of vectors.
Obviously, $[M_{n+1}]_{m, m}=c_1\neq 0$,   \eqref{coeffi-rec-2} has thus the unique solution
\begin{align}
\chi_{n+1}=c_1M_{n+1}^{-1}\begin{pmatrix}0\\\chi_n\end{pmatrix}.\label{recursive}
\end{align}
In terms of $A_{n,r}$ given by \eqref{Adef-new} and the vector
$
\beta_{n,r}:=\begin{pmatrix}
\mathbf{0}_{r}\\\chi_{n-r}\end{pmatrix},
$
we can reformulate \eqref{recursive} in the form
\begin{align}
\beta_{n+1,0}=c_1A_{n+1,0}^{-1}\beta_{n+1,1}.\label{neewimportant}
\end{align}
Iterating \eqref{neewimportant}  $n$ times yields
\begin{align*}
\beta_{n+1,0}=c_1^{n}A_{n+1,0}^{-1}A_{n+1,1}^{-1}\cdots A_{n+1,n-1}^{-1}\beta_{n+1,n}.
\end{align*}
It is easy to check that $\beta_{n+1,n}^T=(0,0,\ldots,0,1)^T$. Finally, we arrive at
\begin{align}
\beta_{n+1,0}=c_1^{n}[A_{n+1,0}^{-1}A_{n+1,1}^{-1}\cdots A_{n+1,n-1}^{-1}]_{n+1}.\label{neewimportant-added}
\end{align}
Recall that $[A]_n$ denotes the $n$th column of $A$. Hence, by virtue of both \eqref{xxxxx-new} and \eqref{neewimportant-added},
we conclude
\begin{align}
\chi_{n+1}(k)&=[\beta_{n+1,0}]_{k+1}=c_1^{n}[A_{n+1,0}^{-1}A_{n+1,1}^{-1}\cdots A_{n+1,n-1}^{-1}]_{k+1,n+1}\nonumber\\
&=\frac{c_1^{n}}{c_1^{2(n+1)-2-k}}C_{n+1,k}(c_1,c_2,\ldots,c_{n+1-k})\nonumber\\
&=\frac{1}{c_1^{n-k}}C_{n+1,k}(c_1,c_2,\ldots,c_{n+1-k}). \label{xxxxx}
\end{align}
 Hence  \eqref{two-great-newnew}, i.e., \eqref{two-great}  is proved.
\qed

 The following is the detailed computation for the case $n=3$ of \eqref{neewimportant}.
\begin{lz} Let $n=3$.  Then the system of linear equations \eqref{coeffi-rec-2} reduces to
\begin{align*}
\left(
\begin{array}{cccc}
 c_1 & -c_2 & c_3 & -c_4 \\
 0 & c_1 & -2 c_2 & 3 c_3 \\
 0 & 0 & c_1 & -3 c_2 \\
 0 & 0 & 0 & c_1 \\
\end{array}
\right)\begin{pmatrix}\chi_4(0)\\\chi_4(1)\\\chi_4(2)\\\chi_4(3)\end{pmatrix}=c_1\begin{pmatrix}0\\\chi_3(0)\\\chi_3(1)\\
\chi_3(2)\end{pmatrix}.
\end{align*}
Multiplying by the inverse of the coefficient matrix on both sides and iterating the linear relation resulted,   we obtain the
solution as
\begin{align*}
&\begin{pmatrix}\chi_4(0)\\\chi_4(1)\\\chi_4(2)\\\chi_4(3)\end{pmatrix}=c_1\left(
\begin{array}{cccc}
 c_1 & -c_2 & c_3 & -c_4 \\
 0 & c_1 & -2 c_2 & 3 c_3 \\
 0 & 0 & c_1 & -3 c_2 \\
 0 & 0 & 0 & c_1 \\
\end{array}
\right)^{-1}\begin{pmatrix}0\\\chi_3(0)\\\chi_3(1)\\
\chi_3(2)\end{pmatrix}\\
&=c_1^2\left(
\begin{array}{cccc}
 c_1 & -c_2 & c_3 & -c_4 \\
 0 & c_1 & -2 c_2 & 3 c_3 \\
 0 & 0 & c_1 & -3 c_2 \\
 0 & 0 & 0 & c_1 \\
\end{array}
\right)^{-1}\left(
\begin{array}{cccc}
 1 & 0 & 0 & 0 \\
 0 & c_1 & -c_2 & c_3 \\
 0 & 0 & c_1 & -2 c_2 \\
 0 & 0 & 0 & c_1 \\
\end{array}
\right)^{-1}\begin{pmatrix}0\\0\\\chi_2(0)\\\chi_2(1)\end{pmatrix}\\
&=c_1^3\left(
\begin{array}{cccc}
 c_1 & -c_2 & c_3 & -c_4 \\
 0 & c_1 & -2 c_2 & 3 c_3 \\
 0 & 0 & c_1 & -3 c_2 \\
 0 & 0 & 0 & c_1 \\
\end{array}
\right)^{-1}\left(
\begin{array}{cccc}
 1 & 0 & 0 & 0 \\
 0 & c_1 & -c_2 & c_3 \\
 0 & 0 & c_1 & -2 c_2 \\
 0 & 0 & 0 & c_1 \\
\end{array}
\right)^{-1}\left(
\begin{array}{cccc}
 1 & 0 & 0 & 0 \\
 0 & 1 & 0 & 0 \\
 0 & 0 & c_1 & -c_2 \\
 0 & 0 & 0 & c_1 \\
\end{array}
\right)^{-1}
\begin{pmatrix}0\\0\\0\\\chi_1(0)
\end{pmatrix}.
\end{align*}
Note that $\chi_1(0)=1$. We finally obtain
\begin{align*}
\begin{pmatrix}\chi_4(0)\\\chi_4(1)\\\chi_4(2)\\\chi_4(3)\end{pmatrix}
=c_1^3\left(
\begin{array}{cccc}
 \frac{1}{c_1} & \frac{c_2}{c_1^3} & \frac{3 c_2^2-c_1 c_3}{c_1^5} & \frac{15 c_2^3-10 c_1 c_3 c_2+c_1^2 c_4}{c_1^6} \\
 0 & \frac{1}{c_1^2} & \frac{3 c_2}{c_1^4} & \frac{15 c_2^2-4 c_1 c_3}{c_1^5} \\
 0 & 0 & \frac{1}{c_1^3} & \frac{6 c_2}{c_1^4} \\
 0 & 0 & 0 & \frac{1}{c_1^3}
\end{array}
\right)
\begin{pmatrix}0\\0\\0\\ 1
\end{pmatrix}=\begin{pmatrix}\frac{15 c_2^3-10 c_1 c_3 c_2+c_1^2 c_4}{c_1^3}\\\frac{15 c_2^2-4 c_1 c_3}{c_1^2} \\ \frac{6 c_2}{c_1}\\1
\end{pmatrix}.
\end{align*}
It yields
\begin{align*}
f_{4}(s)=(ps)^3+6\frac{c_2}{c_1}(ps)^2+\frac{15c_2^2-4c_1c_3}{c_1^2}ps+
\frac{15c_2^3-10c_1c_2c_3+c_1^2c_4}{c_1^3}.
\end{align*}
It is in agreement with \eqref{fffff} of Example \ref{lzold}. \qed
\end{lz}

We are in a good position to show Theorem \ref{333-new-new-000}.

\begin{proof}[Proof of Theorem \ref{333-new-new-000}] Lemmas \ref{generalexpress-new} and \ref{dl42-new},
 Proposition  \ref{recrelation-twotwo} as well as the definition \eqref{kkkk} together gives  the complete proof of Theorem \ref{333-new-new-000}.
 \end{proof}
\subsection{Convolution identities}
Similar to the Bell polynomials $B_{n,k}(x_1,x_2,\ldots,x_{n-k+1})$, it is  worthwhile to investigate  the Mina polynomials $C_{n,k}(x_1,x_2,\ldots,x_{n-k})$ given by Definition \ref{mina}.
\begin{tl} Let $c_n$ be given by \eqref{cccoefficient}. Then for arbitrary integers $1\leq m\leq n-1$, we have
\begin{align}&C_{n,m}(c_1,c_2,\ldots,c_{n-m})\label{finalll}\\
&=\frac{1}{2^{m+1}-2}\sum_{i+j=m-1}\sum_{k=1}^{n-1}\binom{n}{k}
C_{k,i}(c_1,c_2,\ldots,c_{k-i})C_{n-k,j}(c_1,c_2,\ldots,c_{n-k-j}).\nonumber
\end{align}
In particular,
\begin{align}2C_{n,1}(c_1,c_2,\ldots,c_{n-1})
&=\sum_{k=1}^{n-1}\binom{n}{k}
C_{k,0}(c_1,c_2,\ldots,c_{k})C_{n-k,0}(c_1,c_2,\ldots,c_{n-k}),\label{finalll-new}\\
3C_{n,2}(c_1,c_2,\ldots,c_{n-2})
&=\sum_{k=1}^{n-1}\binom{n}{k}
C_{k,1}(c_1,c_2,\ldots,c_{k-1})C_{n-k,0}(c_1,c_2,\ldots,c_{n-k}).\label{finalll-new-new}
\end{align}
\end{tl}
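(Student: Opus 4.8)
The plan is to read the convolution \eqref{finalll} off the additive law \eqref{xxx-1} for the coefficients $\lambda_n(s)$, after rewriting everything first in terms of the $f_n(s)$ via \eqref{kkkk} and then in terms of the Mina polynomials via Proposition \ref{recrelation-twotwo}. The guiding observation is that the binomial weight $\binom{n}{k}$ occurring in \eqref{finalll} is precisely the weight manufactured by \eqref{xxx-1}; the alternative law \eqref{recresults-one} would instead produce $\binom{n-1}{k-1}$, so \eqref{xxx-1} is the correct point of departure. Throughout I would treat $f_n(s)$ as a polynomial $P_n(u)$ in the single variable $u=ps$, so that by \eqref{two-great} one has $P_n(u)=\sum_{m=0}^{n-1}c_1^{-(n-1-m)}C_{n,m}(c_1,\ldots,c_{n-m})\,u^m$, while Proposition \ref{prop3.4} gives the exceptional value $P_0(u)=1/u$.

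First I would substitute $\lambda_n(s)=ps/(n!(-c_1)^n)f_n(s)$ from \eqref{kkkk} into \eqref{xxx-1} with $a,b$ free. The powers $(-c_1)^{-n}$ cancel and the factorials $1/(k!(n-k)!)$ against $1/n!$ coalesce into $\binom{n}{k}$, leaving
\[
(a+b)f_n(a+b)=p\,ab\sum_{k=0}^n\binom{n}{k}f_k(a)f_{n-k}(b).
\]
Writing $\alpha=pa$, $\beta=pb$, this reads $(\alpha+\beta)P_n(\alpha+\beta)=\alpha\beta\sum_{k=0}^n\binom{n}{k}P_k(\alpha)P_{n-k}(\beta)$. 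The only non-polynomial contributions come from $P_0(u)=1/u$, i.e.\ from $k=0$ and $k=n$, which evaluate to $\beta P_n(\beta)$ and $\alpha P_n(\alpha)$. Transposing these to the left converts the relation into a genuine polynomial identity
\[
(\alpha+\beta)P_n(\alpha+\beta)-\alpha P_n(\alpha)-\beta P_n(\beta)=\alpha\beta\sum_{k=1}^{n-1}\binom{n}{k}P_k(\alpha)P_{n-k}(\beta).
\]
Because \eqref{xxx-1} holds for all $a,b\in\mathbb{C}$ and $p\neq0$, this is an identity between polynomials in the two independent variables $\alpha,\beta$, so comparing coefficients is legitimate.

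Finally I would extract coefficients. On the left the $m$-th summand contributes only monomials of total degree $m+1$, via $(\alpha+\beta)^{m+1}-\alpha^{m+1}-\beta^{m+1}=\sum_{l=1}^m\binom{m+1}{l}\alpha^l\beta^{m+1-l}$; on the right the monomial $\alpha^{i+1}\beta^{j+1}$ has total degree $i+j+2$, so degree $m+1$ forces $i+j=m-1$. Matching the coefficient of $\alpha^l\beta^{m+1-l}$ for fixed $1\le l\le m$ — and noting both sides carry the same factor $c_1^{-(n-1-m)}$, which therefore cancels — yields the sharp relation $\binom{m+1}{l}C_{n,m}=\sum_{k=1}^{n-1}\binom{n}{k}C_{k,l-1}C_{n-k,m-l}$. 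Summing this over $1\le l\le m$ and invoking $\sum_{l=1}^m\binom{m+1}{l}=2^{m+1}-2$ gives exactly \eqref{finalll}; the special cases \eqref{finalll-new} and \eqref{finalll-new-new} are the instances $l=1$ (with $m=1$) and $l=2$ (with $m=2$), the latter equally a consequence of \eqref{finalll} under the $k\mapsto n-k$ symmetry of the convolution. The one genuinely delicate point is the singular term $P_0(u)=1/u$: it must be separated out so that the convolution becomes an honest polynomial identity before any coefficient is read off, and one must verify that the powers of $c_1$ balance across the equation; recognizing that it is the summation over $l$ of the refined identities that produces the normalizing factor $2^{m+1}-2$ is the final step.
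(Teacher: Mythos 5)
Your proposal is correct and follows essentially the same route as the paper: both proofs substitute the normalization \eqref{kkkk} into the additive law \eqref{xxx-1}, expand $f_n$ as a polynomial in $ps$ via \eqref{two-great}, isolate the singular $k=0$ and $k=n$ terms, and compare coefficients, with the powers of $c_1$ cancelling exactly as you checked. The only difference is the endgame: the paper first equates coefficients of $(ps)^m$ to get its two-parameter identity \eqref{cccc} and then sets $a=b$ and reads off the coefficient of $a^{m+1}$, producing the factor $2^{m+1}-2$ in one stroke, whereas you match each monomial $\alpha^{l}\beta^{m+1-l}$ separately to obtain the refined relation $\binom{m+1}{l}C_{n,m}=\sum_{k}\binom{n}{k}C_{k,l-1}C_{n-k,m-l}$ and sum over $1\le l\le m$ --- a mild strengthening, since your refined relation is precisely the paper's subsequent corollary \eqref{finalll-new-new-new} (the case $l=m$) generalized to all $l$, and it also delivers the special cases \eqref{finalll-new} and \eqref{finalll-new-new} directly rather than by specializing \eqref{finalll}.
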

\pf It is clear from \eqref{xxxxx} that for $0\leq k\leq n-1$, it holds
\begin{align}\chi_{n}(k)=\frac{C_{n,k}(c_1,c_2,\ldots,c_{n-k})}{c_1^{n-1-k}}.\label{xxx-3}
\end{align}
To establish \eqref{finalll}, we only need to make the replacement $(a,b)\to(as,bs)$ in   \eqref{xxx-1} of  Lemma \ref{recrelation-one}. As a result,  we achieve
\begin{align}
\lambda_{n}(as+bs)=\lambda_n(as)+\lambda_{n}(bs)+\sum_{k=1}^{n-1} \lambda_k(as)\lambda_{n-k}(bs).\label{xxx-2}
\end{align}
Next, in view of \eqref{kkkk}, we may reformulate \eqref{xxx-2}  in terms of $f_n(s)$ and then substitute \eqref{two-great-new} into, obtaining
\begin{align*}\frac{p(a+b)s}{n!c_1^{n}}&
\sum_{k=0}^{n-1}\chi_n(k)(p(a+b)s)^k-\frac{pas}{n!c_1^{n}}
\sum_{k=0}^{n-1}\chi_n(k)(pas)^k-\frac{pbs}{n!c_1^{n}}
\sum_{k=0}^{n-1}\chi_n(k)(pbs)^k\\
&=\sum_{k=1}^{n-1}\bigg(\frac{pas}{k!c_1^{k}}
\sum_{i=0}^{k-1}\chi_k(i)(pas)^i\bigg) ~\bigg(\frac{pbs}{(n-k)!c_1^{n-k}}
\sum_{j=0}^{n-k-1}\chi_{n-k}(j)(pbs)^j\bigg).
\end{align*}
A bit simplification reduces it to
\begin{align*}(a+b)&
\sum_{k=0}^{n-1}\chi_n(k)(p(a+b)s)^k-a
\sum_{k=0}^{n-1}\chi_n(k)(pas)^k-b
\sum_{k=0}^{n-1}\chi_n(k)(pbs)^k\\
&=abps\sum_{k=1}^{n-1}\binom{n}{k}\bigg(\sum_{i=0}^{k-1}\chi_k(i)(pas)^i\bigg) ~\bigg(\sum_{j=0}^{n-k-1}\chi_{n-k}(j)(pbs)^j\bigg).
\end{align*}
By equating the coefficients of $(ps)^m (0\leq m\leq n-1)$ on both sides, we have
\begin{align}&\quad\big((a+b)^{m+1}-a^{m+1}-b^{m+1}\big)\chi_n(m)\nonumber\\&=ab\sum_{k=1}^{n-1}\binom{n}{k}\sum_{i+j=m-1}\chi_k(i)a^ib^j
\chi_{n-k}(j)\nonumber\\
&=ab\sum_{i+j=m-1}a^ib^j\sum_{k=1}^{n-1}\binom{n}{k}\chi_k(i)\chi_{n-k}(j).\label{cccc}
\end{align}
Since the parameters  $a,b$ are arbitrary and $\chi_n(k)$ are independent of $a,b$, we are able to set $a=b$ in \eqref{cccc} and then equate the coefficients of $a^{m+1}$, obtaining
\begin{align*}2(2^{m}-1)\chi_n(m)=\sum_{i+j=m-1}\sum_{k=1}^{n-1}\binom{n}{k}\chi_k(i)\chi_{n-k}(j).
\end{align*} After \eqref{xxx-3} inserted,  it becomes
\begin{align*}&\frac{C_{n,m}(c_1,c_2,\ldots,c_{n-m})}{c_1^{n-1-m}}\\
&=\frac{1}{2^{m+1}-2}\sum_{i+j=m-1}
\sum_{k=1}^{n-1}\binom{n}{k}
\frac{C_{k,i}(c_1,c_2,\ldots,c_{k-i})}
{c_1^{k-1-i}}~\frac{C_{n-k,j}(c_1,c_2,\ldots,c_{n-k-j})}{c_1^{n-k-1-j}}.
\end{align*}
A direct simplification gives  \eqref{finalll}. Evidently, the cases  $m=1$ and $m=2$ specialize \eqref{finalll} to \eqref{finalll-new} and \eqref{finalll-new-new}, respectively.
\qed

In general, we have
\begin{tl}Let $c_n$ be given by \eqref{cccoefficient}. Then for arbitrary integers $1\leq m\leq n-1$, we have
\begin{align}&\qquad(m+1)C_{n,m}(c_1,c_2,\ldots,c_{n-m})\label{finalll-new-new-new}\\
&=\sum_{k=m}^{n-1}\binom{n}{k}
C_{k,m-1}(c_1,c_2,\ldots,c_{k-m+1})C_{n-k,0}(c_1,c_2,\ldots,c_{n-k}).\nonumber
\end{align}
\end{tl}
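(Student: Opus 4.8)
The plan is to reuse the general polynomial identity \eqref{cccc} already produced in the proof of the preceding corollary, but to read off a different coefficient from it. Recall that \eqref{cccc} asserts
\begin{align*}
\big((a+b)^{m+1}-a^{m+1}-b^{m+1}\big)\chi_n(m)
=ab\sum_{i+j=m-1}a^i b^j\sum_{k=1}^{n-1}\binom{n}{k}\chi_k(i)\chi_{n-k}(j),
\end{align*}
and that this holds as an identity in the independent indeterminates $a,b$, the numbers $\chi_n(k)$ being independent of $a,b$. Whereas the previous corollary was obtained by specializing $a=b$ and comparing powers of $a$, here I would instead compare the coefficients of the single monomial $a^m b$ on both sides. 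The crucial observation is that the prefactor $ab$ on the right raises the bidegree of each summand by $(1,1)$, so the monomial $a^m b$ can arise only from the term $i=m-1$, $j=0$; this isolates a single product $\chi_k(m-1)\chi_{n-k}(0)$, in place of the full convolution $\sum_{i+j=m-1}$ that the substitution $a=b$ produces.

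Carrying this out, on the left side only $(a+b)^{m+1}$ contributes to $a^m b$ (the pure powers $a^{m+1},b^{m+1}$ do not, since $m\ge 1$), and its coefficient is $\binom{m+1}{1}=m+1$, so the left side contributes $(m+1)\chi_n(m)$. On the right side the factor $ab$ forces $i=m-1$ and $j=0$, whence the coefficient of $a^m b$ equals $\sum_{k=1}^{n-1}\binom{n}{k}\chi_k(m-1)\chi_{n-k}(0)$. Since $f_k(s)$ has degree $k-1$ in $ps$ by \eqref{two-great}, we have $\chi_k(m-1)=0$ whenever $k<m$, so the summation range tightens to $m\le k\le n-1$. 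Equating the two coefficients then gives the compact relation
\begin{align*}
(m+1)\chi_n(m)=\sum_{k=m}^{n-1}\binom{n}{k}\chi_k(m-1)\chi_{n-k}(0).
\end{align*}

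It remains only to rewrite this in terms of the Mina polynomials via the substitution $\chi_n(k)=C_{n,k}(c_1,\ldots,c_{n-k})/c_1^{\,n-1-k}$ from \eqref{xxx-3}. A direct tally of the powers of $c_1$ shows that $\chi_k(m-1)$ contributes $1/c_1^{\,k-m}$ and $\chi_{n-k}(0)$ contributes $1/c_1^{\,n-k-1}$, whose product $1/c_1^{\,n-1-m}$ matches exactly the factor carried by $\chi_n(m)$ on the left; all powers of $c_1$ therefore cancel and one arrives at \eqref{finalll-new-new-new}. The only points needing care are the justification that $a,b$ may be treated as independent indeterminates when reading off the coefficient of $a^m b$ — legitimate precisely because \eqref{cccc} is a polynomial identity valid for all $a,b$ — and the elementary but easy-to-misplace bookkeeping of the $c_1$-exponents in this final step; there is no genuine analytic obstacle beyond these.
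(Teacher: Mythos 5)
Your proposal is correct and takes essentially the same route as the paper: the paper's proof likewise reads off a single coefficient from the polynomial identity \eqref{cccc} (phrased there as comparing coefficients of $a^m$, which—both sides' $a^m$-coefficients being linear in $b$—is exactly your monomial $a^m b$, isolating $i=m-1$, $j=0$ and giving $\binom{m+1}{m}\chi_n(m)=\sum_k\binom{n}{k}\chi_k(m-1)\chi_{n-k}(0)$), and then inserts \eqref{xxx-3}. Your only additions are the explicit tightening of the summation range to $k\ge m$ via $\chi_k(m-1)=0$ for $k<m$ and the tally of $c_1$-exponents, both of which the paper leaves implicit.
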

\pf It suffices to compare the coefficients of $a^m$ on both sides of \eqref{cccc}. Then we have
 \begin{align*}\binom{m+1}{m}\chi_n(m)=\sum_{k=1}^{n-1}\binom{n}{k}\chi_k(m-1)\chi_{n-k}(0).
\end{align*}
It is,
 after \eqref{xxxxx} inserted,
 equivalent to \eqref{finalll-new-new-new}
\qed
\section{Acknowledgments}
 The first author is supported  by NSF of Zhejiang Province (Grant~No.~LQ20A010004) and NSF of China (Grant~No.~12001492) and the second author is supported  by  NSF of China (Grant~No.~11971341).

\end{document}